\font\got=eufm10 at 12pt
\newcommand{\La}{\Lambda}
\newcommand{\tPi}{Q_t^i}
\newcommand{\tP}{Q}
\newcommand{\la}[0]{{\lambda}}
\newcommand{\lki}[0]{\lambda^{i}_{k_i}}
\newcommand{\lk}[0]{{\lambda_k}}
\newcommand{\mD}[0]{{\mathcal D}}
\renewcommand{\d}[0]{{\text{\got d}}}
\newcommand{\f}[0]{\varphi}
\newcommand{\fki}[0]{\varphi^i_{k_i}}
\newcommand{\fk}[0]{\varphi_{k}}
\newcommand{\g}[0]{{\bf g}}
\newcommand{\tL}{\tilde{L}}
\newcommand{\sk}[2]{\left\langle #1 , #2\right\rangle}
\renewcommand{\geq}[0]{\geqslant}
\renewcommand{\leq}[0]{\leqslant}
\newcommand{\N}[0]{{\mathbb N}}
\newcommand{\R}[0]{\mathbb{R}}
\DeclareMathOperator{\lin}{lin}
\DeclareMathOperator{\Dom}{Dom}
\DeclareMathOperator{\Hess}{Hess}
\newcommand{\Ld}{\tilde{L}}
\newcommand{\pd}[0]{\partial}
\renewcommand{\theta}[0]{\vartheta}
\newtheorem{thm}{Theorem}
\newtheorem*{thms}{Main result (informal)}
\newtheorem{lema}[thm]{Lemma}
\newtheorem{prop}[thm]{Proposition}
\newtheorem*{prop*}{Proposition}
\theoremstyle{remark}
\newtheorem*{remark}{Remark}
\newtheorem*{remark1}{Remark 1}
\newtheorem*{remark2}{Remark 2}
\begin{document}

\title[Dimension-free  $L^p$ estimates for vectors of Riesz transforms]{Dimension-free $L^p$ estimates for vectors of Riesz transforms associated with orthogonal expansions}

\author{B\l a\.{z}ej Wr\'obel}
\address{
	Mathematical Institute
	\\ 	Universit\"at Bonn\\
	Endenicher Allee 60\\ D--53115 Bonn\\ Germany
	\newline \&
	Instytut Matematyczny, Uniwersytet Wroc\l awski,
	pl. Grunwaldzki 2/4, 50-384 Wroc\l aw, Poland}
\email{blazej.wrobel@math.uni.wroc.pl}
\subjclass[2010]{42C10, 42A50, 33C50}
\keywords{Riesz transform, Bellman function, orthogonal expansion}

\begin{abstract}
	An explicit Bellman function is used to prove a bilinear embedding theorem for operators associated with general multi-dimensional orthogonal expansions on product spaces. This is then applied to obtain $L^p,$ $1<p<\infty,$ boundedness of appropriate vectorial Riesz transforms, in particular in the case of Jacobi polynomials. Our estimates for the $L^p$ norms of these Riesz transforms are both dimension-free and linear in $\max(p,p/(p-1)).$ The approach we present allows us to avoid the use of both differential forms and general spectral multipliers. 
\end{abstract}

\maketitle

 \numberwithin{equation}{section}
\section{Introduction}

The classical Riesz transforms on $\R^d$ are the operators
$$R_if(x)=\partial_{x_i}(-\Delta_{\R^d})^{-1/2}f(x),\qquad i=1,\ldots,d.$$
In \cite{St1} E. M. Stein proved that the vector of Riesz transforms
$${\bf R }f=(R_1f,\ldots,R_df)$$
 has $L^p$  bounds which are independent of the dimension. More precisely
 \begin{equation}
 \label{eq:Riesz0}
 \|{\bf R}f\|_{L^p(\R^d)}\leq C_p\, \|f\|_{L^p(\R^d)},\qquad 1<p<\infty,\end{equation}
 where $C_p$ is independent of the dimension $d.$ Note that \eqref{eq:Riesz0} is formally the same as the a priori bound
 \begin{equation*}
\big\||\nabla f|\big\|_{L^p(\R^d)}\leq C_p\, \big\|(-\Delta)^{1/2}f\big\|_{L^p(\R^d)}.\end{equation*} 
 Later it was realized that, for $1<p<2$ one may take $C_p\leq C (p-1)^{-1}$ in \eqref{eq:Riesz0}, see \cite{Ba1}, \cite{RubDuo1}. It is worth mentioning that the best constant in \eqref{eq:Riesz0} remains unknown when $d\ge 2;$ the best results to date are given in \cite{BW1} (see also \cite{DV} for an analytic proof) and \cite{IM1}.  

The main goal of this paper is to generalize \eqref{eq:Riesz0} to product settings different from $\R^d=\R\times \cdots \times \R$ with the product Lebesgue measure. Our starting point is the observation that the classical Riesz transform can be written as
$R_i =\delta_i(\sum_{i=1}^d L_i)^{-1/2}$
where 
$\delta_i=\partial_{x_i},$   and $L_i=\delta_i^*\delta_i.$
The generalized Riesz transforms we pursue are of the same form
\begin{equation}
\label{eq:Rieszgen0}
R_i =\delta_iL^{-1/2},\qquad i=1,\ldots,d,
\end{equation} 
with $\delta_i$ being an operator on $L^2(X_i,\mu_i)$,  $$L_i=\delta_i^*\delta_i+a_i,\qquad \textrm{and}\qquad  L=\sum_{i=1}^d L_i.$$ 
Here $a_i$ is a non-negative constant. The adjoint  $\delta_i^*$ is taken with respect to the inner product on $L^2(X_i,\mu_i),$ where $\mu_i$ is a non-negative Borel measure on $X_i$ such that $d\mu_i(x_i)=w_i(x_i)dx_i$ for some positive and smooth function $w_i$ on $X_i.$ To be precise, if $0$ is an $L^2$ eigenvalue of $L,$ then the definition of $R_i$ needs to be slightly modified; this is properly explained in the next section. Throughout the paper we assume that each $X_i,$ $i=1,\ldots,d,$ is an open interval in $\R$, an open half-line in $\R$ or is the real line; we also set $X=X_1\times \cdots \times X_d$ and $\mu=\mu_1\otimes \cdots \otimes \mu_d.$  We consider $\delta_i$ being given by
$$\delta_i f(x)=p_i(x_i)\,\partial_{x_i}+q_i(x_i),\qquad x_i \in X_i,$$
for some real-valued functions $p_i \in C^{\infty}(X_i)$ and $q_i\in C^{\infty}(X_i).$ We remark that a significant difference between the classical Riesz transforms and the general Riesz transforms \eqref{eq:Rieszgen0} lies in the fact that the operators $\delta_i$ and $\delta_i^*$ do not need to commute. 

There are two assumptions which are critical to our results. Firstly, a computation, see \cite[p. 683]{NS2}, shows that the commutator $[\delta_i,\delta_i^*]$ is a function which we call $v_i$. We assume that $v_i$ is non-negative, cf.\ \eqref{eq:A1}. Secondly, it is not hard to see that $L=\sum_{i=1}^d L_i$ may be written as $L=\tL+r,$ where $\tL$ is a purely differential operator (without a zero order potential term) and $r$ is the potential term. We impose that $\sum_{i=1}^d q_i^2$  is controlled pointwise from above by a constant times $r,$ namely $\sum_{i=1}^d q_i^2\le K\cdot r,$ for some $K\geq 0,$  cf.\ \eqref{eq:A2}. In several cases we will consider we can take $K=1$ or $K=0.$ In particular if $q_1=\cdots=q_d=0$ then the bound \eqref{eq:A2} holds with $K=0.$  When $0$ is not an $L^2$ eigenvalue of $L$ our main result can be summarized as follows.    
\begin{thms}
Set  $p^*=\max(p,p/(p-1)).$ Then the vectorial Riesz transform ${\bf R} f=(R_1f,\ldots,R_df)$ with $R_i$ given by  \eqref{eq:Rieszgen0} satisfies the bounds
$$\left\|{\bf R }f\right\|_{L^p(X,\mu)}\leq 24(1+\sqrt{K})(p^*-1)\|f\|_{L^p(X,\mu)},\qquad 1<p<\infty.$$
In other words, introducing $\delta f=(\delta_1 f,\ldots,\delta_d f),$ we have
$$\big\||\delta f|\big\|_{L^p(X,\mu)}\leq 24(1+\sqrt{K})(p^*-1)\left\|L^{1/2}f\right\|_{L^p(X,\mu)},\qquad 1<p<\infty.$$
\end{thms}  

The rigorous statement of our main result is contained in Theorem \ref{thm:main}. In order to prove it we need some extra technical assumptions. For the sake of clarity of the presentation we decided to concentrate on the case of orthogonal expansions, when each of the operators $L_i=\delta_i^*\delta_i+a_i$ has a decomposition in terms of an orthonormal basis. Our precise setting is described in detail in Section \ref{Sec:Prelim}. We follow the approach of Nowak and Stempak from \cite{NS2}, in fact the present paper may be thought of as an $L^p$ counterpart for a large part of the $L^2$ results from \cite{NS2}. Adding the technical assumptions \eqref{eq:T1}, \eqref{eq:T2}, and  \eqref{eq:T3} to the crucial assumptions \eqref{eq:A1} and \eqref{eq:A2} we state our main result Theorem \ref{thm:main} in Section \ref{sec:GenRiesz}. In all the cases we will consider, the projection $\Pi$ appearing in Theorem \ref{thm:main} is the identity operator or has its $L^p$ norm bounded by $2$ for all $1\le p\le \infty.$ Moreover, we have $\Pi=I$ if and only if $0$ is not an $L^2$ eigenvalue of $L.$

 From Theorem \ref{thm:main} we obtain several new  dimension-free bounds on $L^p,$ $1<p<\infty,$ for vectors of Riesz transforms connected with classical multi-dimensional orthogonal expansions. For more details we refer to the examples in Section \ref{sec:Exa}. For instance in Section \ref{sec:Jac} we obtain the dimension-free boundedness for the vector of Riesz transforms in the case of Jacobi polynomial expansions. This answers a question left open in Nowak and Sj\"ogren's \cite{NSj2}. Moreover, the approach we present gives a unified way to treat dimension-free estimates for vectors of Riesz transforms. In most of the previous cases separate papers were written for each of the classical orthogonal expansions. More unified approaches were recently presented by Forzani, Sasso, and Scotto in \cite{FSS1} and by the author in \cite{BWr1}. However, these papers treat only dimension-free estimates for scalar Riesz transforms and not for the vector of Riesz transforms.

Let us remark that Theorem \ref{thm:main} formally cannot be applied to some cases where the crucial assumptions on $v_i$ and $r$ continue to  hold. This is true when $L$ has a purely continuous spectrum,  for instance for the classical Riesz transforms on $\R^d$ (when $v_i=0$ and $r=0$). However, it is not difficult to modify the proof of Theorem \ref{thm:main} so that it remains valid for the classical Riesz transforms. We believe that a similar procedure can be applied to other cases outside the scope of Theorem \ref{thm:main}, as long as the crucial assumptions \eqref{eq:A1} and \eqref{eq:A2} are satisfied. 

We deduce Theorem \ref{thm:main} from a bilinear embedding theorem  (see Theorem \ref{thm:bilem}) together with a bilinear formula (see Proposition \ref{pro:Form1}). The main tool that is used to  prove Theorem \ref{thm:bilem} is the Bellman function technique. This method was introduced to harmonic analysis by Nazarov, Treil, and Volberg \cite{NTV1}. Before \cite{NTV1} Bellman functions appeared implicitly in the work of Burkholder \cite{Bur1}, \cite{Bur2}, \cite{Bur3}. The proof of Theorem \ref{thm:bilem} is presented in Section \ref{sec:BilEm} and is based on subtle properties of a particular Bellman function. This approach was devised by Dragi\v{c}evi\'c and Volberg in \cite{DV,DV-Kato,DV-Sch}.
Carbonaro and Dragi\v{c}evi\'c developed the method further in \cite{CD}, \cite{CD-mult}, \cite{CD-NonSymOu}, and \cite{CD-DivCom}. The approach from \cite{CD} was recently adapted by Mauceri and Spinelli in \cite{MSJFA} to the case of the Laguerre operator. Our paper generalizes simultaneously \cite{DV-Sch} (as we admit a non-negative potential $r$) and \cite{DV}, \cite{MSJFA} (as we consider general $p_i$ in $\delta_i=p_i\partial_{x_i}+q_i$).    

In some applications of the Bellman function method the authors needed to prove dimension-free bounds on $L^p$ for certain spectral multipliers related to the considered operators, see \cite{DV} and \cite{DV-Sch} for such a situation. In other papers mentioned in the previous paragraph they needed to consider operators acting on differential forms, cf. \cite{CD} and \cite{MSJFA}. One of the merits of our approach is that we avoid to use both general spectral multipliers and differential forms. This is achieved by means of the bilinear formula from Proposition \ref{pro:Form1}. This formula relates the Riesz transform $R_i$ with an integral where only $\delta_i$ and two kinds of semigroups (one for $L$ and one for $L+v_i$)  are present,  see \eqref{visby}. 

For the sake of simplicity we use a Bellman function with real entries in Section \ref{sec:BilEm}. Thus our main results Theorems \ref{thm:main} and \ref{thm:bilem} apply to real-valued functions. Of course they can be easily extended to complex valued-functions with the constants being twice as large. One may improve the estimates further by using a Bellman function with complex arguments as it was done in \cite{DV}, \cite{DV-Kato},  and \cite{DV-Sch}.

{\bf Notations.} We finish this section by introducing the general notations used in the paper. By $\N$ we denote the set of non-negative integers. For $N\in \N$ and $Y$ being an open subset of $\R^N$ the symbol $C^{n}(Y),$ $n\in \N,$ denotes the space of real-valued functions which have continuous partial derivatives in $Y$ up to the order $n.$ In particular $C^0(Y)=C(Y)$ denotes the space of continuous functions on $Y$ equipped with the supremum norm. By $C^{\infty}(Y)$ we mean the space of infinitely differentiable functions on $Y.$ Whenever we say that $\nu$ is a measure on $Y$ we mean that $\nu$ is a Borel measure on $Y$. The symbols $\nabla f$ and $\Hess f$ stand for the gradient and the Hessian of a function $f\colon \R^N\to \R.$ For  $a,b\in \R^N,$ we denote by $\sk{a}{b}$ the inner product on $\R^N$ and set $|a|^2=\sk{a}{a}.$ The actual $N$ should be clear from the context (in fact we always have $N\in\{1,d,d+1\}$). For $p\in (1,\infty)$ we set
$$p^*=\max\bigg(p,\frac{p}{p-1}\bigg).$$

\section{Preliminaries}
\label{Sec:Prelim}
 All the functions we consider are real-valued. Our notations will closely follow that of \cite{NS2}.  

For $i=1,\ldots,d,$ let $X_i$ be the real line $\R,$ an open half-line in $\R$ or an open interval in $\R$  of the form
$$X_i=(\sigma_i,\Sigma_i),\textrm{ where }-\infty\leq \sigma_i<\Sigma_i\leq \infty.$$ Consider the measure spaces $(X_i,\mathcal{B}_i,\mu_i),$ where $\mathcal{B}_i$ denotes the $\sigma$-algebra of Borel subsets of $X_i$ and $\mu_i$ is a Borel measure on $X_i.$ 
We impose that $d\mu_i(x_i)=w_i(x_i)\,dx_i,$ where $w_i$ is a positive $C^{\infty}$ function on $X_i.$ Note that in \cite{NS2} the authors assumed that $X_1=\cdots=X_d;$ this is however not needed in our paper. Throughout the article we let $$X=X_1\times\cdots \times X_d,\qquad \mu=\mu_1\otimes \cdots \otimes \mu_d,$$ and abbreviate $$L^p:=L^p(X,\mu),\qquad \|\cdot\|_{p}=\|\cdot\|_{L^p},\qquad \textrm{and}\qquad  \|\cdot\|_{p\to p}=\|\cdot \|_{L^p\to L^p}.$$ This notation is also used for vector-valued functions. Namely, if ${\bf g}=(g_1,\ldots,g_N)\colon X\to \R^N,$ for some $N\in \N,$ then 
$$\|{\bf g}\|_p=\bigg(\int_{X}|{\bf g}(x)|^p\,d\mu(x)\bigg)^{1/p},\qquad\textrm{with}\qquad |{\bf g}(x)|=\bigg(\sum_{i=1}^N|g_i(x)|^2\bigg)^{1/2}.$$
We shall also write $\sk{f}{g}_{L^2}$ for $\sk{f}{g}_{L^2(X,\mu)}.$ 

Let $\delta_i,$ $i=1,\hdots,d,$ be the operators acting on $C_c^{\infty}(X_i)$ functions via
$$
\delta_i=p_i\pd_{x_i}+q_i.
$$
Here $p_i$ and $q_i$ are real-valued functions on $X_i$,  with $p_i\in C^{\infty}(X_i)$ and $q_i\in C^{\infty}(X_i).$ We assume that $p_i(x_i)\neq 0,$ for $x_i \in X_i.$ We shall also denote by $p$ and $q$ the exponents of $L^p$ and $L^q$ spaces. This will not lead to any confusion as the functions $p_i$ and $q_i$ will always appear with the index $i=1,\ldots,d$. 

Let  $\delta_i^*$ be the formal adjoint of $\delta_i$ with respect to the inner product on $L^2(X_i,\mu_i),$ i.e.
$$\delta_i^*f=-\frac1{w_i}\partial_{x_i}\big(p_i w_i f\big)+q_if,\qquad f\in C_c^{\infty}(X_i).$$
 A simple calculation, see \cite[p. 683]{NS2}, shows that the commutator \begin{equation}
\label{eq:com1}
[\delta_i,\delta_i^*]=\delta_i\delta_i^*-\delta_i^*\delta_i=p_i\left(2q_i'-\left(p_i\frac{w_i'}{w_i}\right)'-p_i''\right)=:v_i
\end{equation} is a locally integrable function (0-order operator). 
Most of the assumptions made in this section are of a technical nature. The first of the two assumptions that are crucial to our results is the following: 
\begin{equation}
\tag{A1}
\label{eq:A1}
\textit{the functions }v_i, i=1,\ldots,d,\textit{ are non-negative}.
	\end{equation}
The property \eqref{eq:A1} has been (explicitly or implicitly) instrumental  for establishing the main results in \cite{HRST}, \cite{MSJFA}, \cite{NSj2}, \cite{StWr1}. It is also explicitly stated by Forzani, Sasso, and Scotto as Assumption H1 c) in \cite{FSS1}.

For a scalar $a_i\ge 0$ we let $L_i$ and $L$ to be given on $C_c^{\infty}(X)$ by 
$$
L_i:=\delta_i^*\delta_i+a_i,\qquad L=\sum_{i=1}^d L_i.
$$
Here each $L_i$ can be considered to act either on $C_c^{\infty}(X_i)$ or on $C_c^{\infty}(X),$ thus the definition of $L$ makes sense. Note that both $L_i$ and $L$ are symmetric on $C_c^{\infty}(X)$ with respect to the inner product on $L^2.$ We assume that for each $i=1,\ldots,d,$ there is an orthonormal basis $\{\fki\}_{k_i\in \N}$ which consists of $L^2(X_i,\mu_i)$ eigenvectors of $L_i$ that correspond to non-negative eigenvalues $\{\lambda_{k_i}^i\}_{k_i\in \N},$ i.e.
$$L_i \fki=\lki \fki.$$ 
Then, it must be that $\la_{k_i}\ge a_i,$ for $k_i\in \N$ and $i=1,\ldots,d.$  
We require that the sequence $\{\lki\}_{k_i\in \N}$ is strictly increasing and that $\lim_{k_i\to \infty}\lki=\infty.$  
Note that our assumptions on $p_i,q_i,$ and $w_i$ imply that $L_i$ is hypoelliptic. Therefore  we have $\fki\in C^{\infty}(X_i).$   
Setting, for $k=(k_1,\ldots,k_d)\in \N^d,$ 
\begin{equation}
\label{eq:fkdef}
\fk=\f^{1}_{k_1}\otimes \cdots \otimes \f^{d}_{k_d}, \end{equation}
we obtain an orthonormal basis of eigenvectors on $L^2$ for the operator $L=L_1+\cdots + L_d.$  The eigenvalue corresponding to $\fk$ is $$\lk:=\la^{1}_{k_1}+\cdots+ \la^d_{k_d},$$
so that $L \fk= \lk \fk.$  
We consider the self-adjoint extension of $L$ (still denoted by the same symbol) given by
$$Lf=\sum_{k\in\N^d}\la_k\,\sk{ f}{\f_{k}}_{L^2}\f_{k}$$
on the domain
$$\Dom(L)=\{f\in L^2\colon \sum_{k\in\N^d}|\la_k|^2|
\sk{ f}{\f_{k}}_{L^2}|^2<\infty\}.$$ 
We assume that the eigenfunctions $\fki,$ $i=1,\ldots,d,$ are such that
\begin{equation}
\tag{T1}
\label{eq:T1}
\sk{\delta_i \fki}{\delta_i \f^{i}_{m_i}}_{L^2(X_i,\mu_i)}=\sk{\delta_i^*\delta_i\, \fki}{ \f^{i}_{m_i}}_{L^2(X_i,\mu_i)},
\end{equation}
for $i=1,\ldots,d,$ and $k_i,m_i\in \N,$ cf.\ \cite[eq.\ (2.8)]{NS2}. The condition \eqref{eq:T1} implies that the functions \begin{equation}
\label{eq:dfkidef}\delta_i \fk=\f^1_{k_1}\otimes\cdots \otimes  \delta_i \f_{k_i}^{i}\otimes \cdots\otimes \f_{k_d}^d\end{equation}
are pairwise orthogonal on $L^2$ and
$$\sk{\delta_i \fk}{\delta_i \f_{k}}_{L^2}=\la_{k_i}^{i}-a_i.$$
cf.\  \cite[Lemma 5,6]{NS2}. Moreover, since $\fk\in C^{\infty}(X)$ we also see that $\delta_i \fk \in C^{\infty}(X).$  

We remark that our assumptions differ slightly from those in 
\cite{NS2}. Namely, we assume that the coefficients $p_i,q_i,$ and the 
weight $w_i$ are  $C^{\infty}$ functions, whereas in \cite{NS2} the 
authors considered $p_i,q_i,w_i$ that possessed only a finite 
order of smoothness. The smoothness of these functions is in fact needed 
to easily conclude that $L_i$ is hypoelliptic and that $\fk \in 
C^{\infty}(X)$, which is an issue that was overlooked\footnote[2]{The hypoellipticity of $L_i$ is not necessary for the theory from 
	\cite{NS2} to work \cite{Npriv}. When not having this property one has to 
	add instead some extra assumptions (much weaker than smoothness) on 
	the regularity of the eigenfunctions $\fk$.} in 
\cite{NS2}.

We also impose a boundary condition on the functions $\fki$ and $\delta_i \fki.$ 
Namely, we require that for each $i=1,\ldots,d,$ if  $z_i\in\{\sigma_i,\Sigma_i\},$ then
\begin{equation}
\tag{T2}
\label{eq:T2}
\begin{split}
\lim_{x_i\to z_i}\left[(1+|\fki|^{s_1}+|\delta_i \fki|^{s_2})(p_i^2\, w_i\, \pd_{x_i} \fki)\right](x_i)&=0,\\
\lim_{x_i\to z_i}\left[(1+|\fki|^{s_1}+|\delta_i \fki|^{s_2})(p_i^2\, w_i\, \pd_{x_i} \delta_i \fki)\right](x_i)&=0,
\end{split}
\end{equation} 
for all $k_i\in \N $ and $s_1,s_2> 0.$  
Condition \eqref{eq:T2} is close to the assumption H1 a) from \cite{FSS1}. Observe that the term $|\fki|^{s_1}+|\delta_i \fki|^{s_2}$ in \eqref{eq:T2} is significant only when the functions $\fki$ and $\delta_i \fki$ are unbounded on $X_i.$

Let 
$$A=a_1+\cdots+a_d,\qquad \Lambda_0=\la_0^1+\cdots +\la_0^d.$$
Then $\Lambda_0 $ is the smallest eigenvalue of $L.$ We set
$$\N^d_{\La}=\left\{\begin{tabular}{ c  }
$\N^d,\hspace{2.5cm}  \Lambda_0>0$\\
$\N^d\setminus\{(0,\ldots,0)\},\quad \Lambda_0=0.$
\end{tabular}\right.$$
and define 
$$\Pi f=\sum_{k\in \N^d_{\Lambda}}\sk{f}{\f_{k}}_{L^2}\f_{k}.$$
Then in the case $\Lambda_0>0$ we have $\Pi=I,$ while in the case $\Lambda_0=0$ the operator $\Pi$ is the projection onto the orthogonal complement of the vector $\f_{(0,\ldots,0)}.$ 
The Riesz transforms studied in this paper are formally of the form
$$R_i:=\delta_i L^{-1/2}\Pi,$$
while the rigorous definition of $R_i$ is 
\begin{equation*}
R_i f= \sum_{k\in \N^d_{\Lambda}}\la_{k}^{-1/2}\sk{ f}{\f_{k}}_{L^2}\delta_i\f_{k}.
\end{equation*} 
In many of the considered cases  $\Pi\equiv I$ so that $R_i=\delta_iL^{-1/2}.$

It was proved in \cite[Proposition 1]{NS2} that the vector of Riesz transforms $${\bf R}f=(R_1 f,\ldots,R_d f)$$ satisfies
$$\|{\bf R} f\|_{2\to 2}\leq \|f\|_2.$$
The main goal of this paper is to prove similar estimates for $p$ in place of $2.$ We aim at these estimates being dimension-free and linear in $p^*.$ More precisely, we shall prove that for $1<p<\infty$ it holds 
 $$\|{\bf R} f\|_{p\to p}\leq C(p^*-1)\|f\|_p.$$
 Here $C$ is a constant that is independent of both $p$ and the dimension $d.$

To state and prove our main results we need several auxiliary objects. Firstly, we let 
\begin{equation}
\label{eq:didef}\d_i=p_i\partial_{x_i}.\end{equation}
That is, $\d_i$ is the 'differential' part of $\delta_i.$  In many (though not all) of our applications we will have $q_i\equiv 0$ and thus $\delta_i\equiv \d_i.$ The formal adjoint of $\d_i$ on $L^2(X_i,\mu_i)$ is
\begin{equation}
\label{eq:dderadj}
\d_i^*f=-\frac1{w_i}\partial_{x_i}\big(p_i w_i f\big),\qquad f\in C_c^{\infty}(X_i).
\end{equation} A computation shows that
 $L_i=\d_i^*\d_i+r_i,$
 with \begin{equation}
 \label{eq:rform1}
 r_i=a_i +\bigg(q_i^2-p_iq_i'-p_i'q_i -p_iq_i \frac{w_i'}{w_i}\bigg).\end{equation} We shall also need
 $$\Ld:=\sum_{i=1}^d \d_i^*\d_i=L-r,\qquad \textrm{where }r:=\sum_{i=1}^d r_i.$$
Then $\Ld$ is the potential-free component of $L$ and the potential $r$ is a locally integrable function on $X$. We assume that  
 \begin{equation}
 \tag{A2}
 \label{eq:A2}
\textit{there is a constant $K\geq 0$ such that } \sum_{i=1}^d q_i^2(x_i)\le  K\cdot  r(x),
 \end{equation} 
 for all $x\in X.$ This is our second (and last) crucial assumption.
In many of our examples we shall have $q_1=\cdots=q_d=0$ and thus $r=A$ and \eqref{eq:A2} holding with $K=0.$

Next we define
 $$
 M_i:=\sum_{j\neq i}\delta_j^*\delta_j+\delta_i\delta_i^*=L+[\delta_i,\delta_i^*]=L+v_i,
 $$
 cf.\ \cite[(eq. 5.1)]{NS2},  and set $$c^i_{k}=\|\delta_i \fk \|_2^{-1},$$ if $\delta_i \fk\ne 0$ and $c_k^i=0$ in the other case. 
Then $\{c_k^i\delta_i \fk\}_{k\in \N^d}$ (excluding those of $c_k^i\delta_i \fk$ which vanish) is an orthonormal system of eigenvectors of $M_i$ such that $M_i(c_k^i\delta_i \fk)$ equals $ \la_k c_k^i\delta_i \fk.$ 

We denote
$$\mD=\lin\{\fk\colon k\in \N^d\},\qquad \mD_i=\delta_i[\mD]=\lin\{\delta_i \fk\colon k\in \N^d\},$$
and make the technical assumption that
\begin{equation}
\tag{T3}
\label{eq:T3}
\textrm{both $\mD$ and $\mD_i,$ $i=1,\ldots,d$, are dense subspaces of $L^p$, $1\le p<\infty$}.
\end{equation}
In most of our applications the condition \eqref{eq:T3} will follow from \cite[Lemma 7.5]{FSS1}, which is itself a consequence of \cite[Theorem 5]{BC1}.
\begin{lema}[{\cite[Lemma 7.5]{FSS1}}]
	\label{lem:BC1}
	Assume that $\nu$ is a measure on $X$ such that, for some $\varepsilon>0$ we have
	$$\int_{X}\exp\bigg({\varepsilon\sum_{i=1}^d|y_i|}\bigg)\,d\nu(y)<\infty.$$  
	Then, for each $1\leq p<\infty,$ multivariable polynomials on $X$ are dense in $L^p(X,\nu).$
\end{lema}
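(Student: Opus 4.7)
The plan is to argue by duality combined with a Fourier--Laplace transform argument. First I would observe that the exponential-moment assumption implies $|y|^n \in L^p(X,\nu)$ for every $n\in\N$ and every $1\le p<\infty$ (bounding $|y_j|^n \le n!\,\varepsilon^{-n}\, e^{\varepsilon|y_j|}$), so all multivariable polynomials belong to $L^p(X,\nu)$ and their closed linear span is a well-defined closed subspace. Note also that $\nu(X)<\infty$ since $e^{\varepsilon\sum_j|y_j|}\ge 1$.

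By the Hahn--Banach theorem, density of the polynomials in $L^p(X,\nu)$ is equivalent to the assertion that any $g$ in the dual space (i.e.\ $g\in L^{q}(X,\nu)$ with $1/p+1/q=1$ when $1<p<\infty$, and $g\in L^{\infty}(X,\nu)$ when $p=1$) satisfying $\int_X P\, g\, d\nu=0$ for every polynomial $P$ must vanish $\nu$-a.e. Fix such a $g$ and consider its Fourier--Laplace transform
$$F(\zeta)=\int_X e^{i\zeta\cdot y}\,g(y)\,d\nu(y),\qquad \zeta\in\C^d.$$
I would show $F$ is well defined and holomorphic on the tube $T_\eta=\{\zeta\in\C^d: \max_j|\Im\zeta_j|<\eta\}$ for $\eta=\varepsilon/(2p)$. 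Indeed $|e^{i\zeta\cdot y}|\le e^{\eta\sum_j|y_j|}$ on $T_\eta$, and H\"older's inequality gives
$$\int_X|g(y)|\,e^{\eta\sum_j|y_j|}\,d\nu(y)\le \|g\|_{L^{q}(\nu)}\Big(\int_X e^{p\eta\sum_j|y_j|}\,d\nu\Big)^{1/p},$$
which is finite since $p\eta<\varepsilon$ (with the obvious modification for $p=1$, using $\|g\|_\infty$ in place of $\|g\|_q$). A standard application of Morera's theorem slice-wise, together with differentiation under the integral sign justified by the same majorant, then yields holomorphy of $F$ on $T_\eta$.

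The orthogonality $\int_X y^\alpha g\, d\nu=0$ for every multi-index $\alpha$ (which follows from the assumption on $g$, as $y^\alpha$ is a polynomial) translates, via the power-series expansion of $e^{i\zeta\cdot y}$, into $(\partial^\alpha F)(0)= i^{|\alpha|}\int_X y^\alpha g\,d\nu = 0$ for all $\alpha$. Since $T_\eta$ is a connected open neighbourhood of $0$ and $F$ is holomorphic there, the identity principle forces $F\equiv 0$ on $T_\eta$, hence in particular on $\R^d$. Observing that $g\,d\nu$ is a finite complex Borel measure on $\R^d$ (by H\"older and the finiteness of $\nu$), standard Fourier uniqueness yields $g\,d\nu=0$, i.e.\ $g=0$ $\nu$-a.e., as required.

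The main obstacle is the justification of the holomorphic extension of $F$ to a tube of positive width: this is where the exponential moment hypothesis is essential and cannot be weakened to, say, finiteness of all polynomial moments (which would fail to exclude the classical non-determinacy examples of Stieltjes). Once the strip of analyticity is secured, the identity principle and Fourier uniqueness complete the argument routinely; the overall strategy is the classical one underlying the determinacy results of \cite[Theorem 5]{BC1}.
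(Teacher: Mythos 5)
The paper does not prove this lemma; it defers to \cite[Lemma 7.5]{FSS1}, which itself rests on \cite[Theorem 5]{BC1}, so there is no in-paper argument to compare against. Your proof is the classical route to density under exponential moments and it is correct. The Hahn--Banach reduction is valid ($\nu$ is finite since $e^{\varepsilon\sum_j|y_j|}\ge 1$, so the duality $(L^p)^*=L^q$ holds also at $p=1$); the choice $\eta=\varepsilon/(2p)$ makes $p\eta<\varepsilon$, so H\"older furnishes the uniform integrable majorant on $T_\eta$ needed for Morera and for differentiation under the integral sign; the vanishing of all moments of $g\,d\nu$ gives $\partial^\alpha F(0)=0$ for every $\alpha$, so $F\equiv 0$ on the connected tube by the identity principle; and since $\int_X|g|\,d\nu\le\|g\|_q\,\nu(X)^{1/p}<\infty$, the complex measure $g\,d\nu$ is finite and Fourier uniqueness forces $g=0$ $\nu$-a.e. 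One small polish: to get $y^\alpha\in L^p(\nu)$ you want the coordinatewise bound $|y_j|^{s}\le\Gamma(s+1)\,\varepsilon^{-s}e^{\varepsilon|y_j|}$ with $s=p\alpha_j$ (not necessarily an integer) and then multiply over $j$, so the exponentials recombine into $e^{\varepsilon\sum_j|y_j|}$; but this is immediate and does not affect the argument.
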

%
%

 In what follows we consider the self-adjoint extension of $M_i$ given by
\begin{equation*}
M_if=\sum_{k\in \N^d}\la_k \sk{ f}{c_k^i\,\delta_i\f_{k}}_{L^2}c_k^i\,\delta_i\f_{k},\end{equation*}
on the domain
$$\Dom(M_i)=\{f\in L^2\colon \sum_{k\in\N^d}|\la_k|^2|\sk{ f}{c_k^i\delta_i\f_{k}}_{L^2}|^2<\infty\}.$$
Keeping the symbol $M_i$ for this self-adjoint extension is a slight abuse of notation, which however will not lead to any confusion.
Finally, we shall need the semigroups 
 $$P_t:=e^{-tL^{1/2}}\qquad \textrm{and}\qquad \tP_t^{i}:=e^{-tM_i^{1/2}}.$$
 These are formally defined on $L^2$ as
 $$P_tf= \sum_{k\in \N^d}e^{-t\la_{k}^{1/2}}\sk{ f}{\f_{k}}_{L^2}\f_{k},\qquad  \tPi f=\sum_{k\in \N^d}e^{-t\la_k^{1/2}}\,\sk{ f}{c_k^i\,\delta_i\f_{k}}_{L^2}c_k^i\,\delta_i\f_{k}.$$
 Note that for $t>0$ we have $P_t[\mD]\subseteq \mD$ and $Q_t^i[\mD_i]\subseteq \mD_i,$ $i=1,\ldots,d.$  

\section{General results for Riesz transforms}
\label{sec:GenRiesz}
Recall that we are in the setting of the previous section. In particular the assumptions \eqref{eq:A1}, \eqref{eq:A2}, and the technical assumptions \eqref{eq:T1}, \eqref{eq:T2}, \eqref{eq:T3}, are in force. The following is  the main result of our paper. 
\begin{thm}
	\label{thm:main}
For each $1<p<\infty$ we have
	\begin{equation*}
	\left\|{\bf R} f\right\|_p\leq 24(1+\sqrt{K})(p^*-1)\|\Pi f\|_{L^p},\qquad f\in L^p.
	\end{equation*}
\end{thm}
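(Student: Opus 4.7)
The plan is to combine $L^p$--$L^q$ duality with the announced bilinear formula (Proposition~\ref{pro:Form1}) and bilinear embedding (Theorem~\ref{thm:bilem}). Setting $q=p/(p-1)$, I would first write
\[
\|{\bf R} f\|_p \;=\; \sup\Big|\sum_{i=1}^d \sk{R_i f}{g_i}_{L^2}\Big|,
\]
where the supremum is taken over vector-valued $\g=(g_1,\dots,g_d)$ with $\|\g\|_q\leq 1$. Using the density condition \eqref{eq:T3} together with the self-adjointness of $R_i$ on finite eigenfunction sums, I would reduce the problem to $f\in\mD$ and $g_i\in\mD_i$, which is the natural class on which the formulas of Proposition~\ref{pro:Form1} make unambiguous sense. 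Note also that on $\mD$ the projection $\Pi$ is just the orthogonal removal of the $(0,\ldots,0)$-coefficient, so in the reduction we may assume $f=\Pi f$.

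Next, for each $i$ I would apply Proposition~\ref{pro:Form1} to rewrite $\sk{R_i f}{g_i}_{L^2}$ as an integral over $t\in(0,\infty)$ pairing $\delta_i P_t f$ against $\tPi g_i$, weighted by an explicit factor in $t$. The mechanism behind such a formula is the intertwining identity $\delta_i L=M_i\delta_i$ (which is immediate from the definition $M_i=L+[\delta_i,\delta_i^*]$, together with the fact that $\delta_i$ commutes with each $L_j$ for $j\neq i$); this identity promotes to $\delta_i P_t=\tPi\delta_i$ on $\mD$, which is precisely what allows both semigroups to appear in the representation. The boundary condition \eqref{eq:T2} guarantees that integration by parts in $t$ produces no boundary contributions, and \eqref{eq:T1} ensures the orthogonality of $\{\delta_i\fk\}_{k}$ needed to interpret everything inside the $L^2$ pairing.

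Summing over $i$ and applying Cauchy--Schwarz inside the integral, I would dominate $|\sum_i\sk{R_i f}{g_i}_{L^2}|$ by an expression comparing the vector $|\delta P_t f|$ with $(\tPi g_i)_{i=1}^d$, plus a lower-order contribution whose presence is forced by the potential $r$. The bilinear embedding theorem (Theorem~\ref{thm:bilem}) is then designed precisely to handle this object: it will bound such an integral by $C(p^*-1)\|f\|_p\|\g\|_q$, where the control of the potential term produces the factor $1+\sqrt{K}$ courtesy of assumption \eqref{eq:A2}. Taking the supremum over $\g$ yields the stated inequality with constant $24(1+\sqrt{K})(p^*-1)$.

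The main obstacle in this pipeline is \emph{not} the reduction itself, which is an almost formal rearrangement once Proposition~\ref{pro:Form1} and Theorem~\ref{thm:bilem} are in hand; it is (a) proving the bilinear formula, which requires carefully verifying the integration-by-parts in $t$, including justifying convergence at $t=0$ and $t=\infty$ via the spectral expansion on $\mD$ and the boundary assumption \eqref{eq:T2}; and (b) establishing Theorem~\ref{thm:bilem} by Bellman function methods in the spirit of Dragi\v{c}evi\'c--Volberg and Carbonaro--Dragi\v{c}evi\'c, where the subtle point is the simultaneous accommodation of the non-commuting pair $(\delta_i,\delta_i^*)$, the non-negativity \eqref{eq:A1} of $v_i$, and the potential $r$ subject to \eqref{eq:A2}, so that the Bellman function still satisfies the requisite size and convexity estimates with a constant that is linear in $p^*-1$ and only grows like $1+\sqrt{K}$ in the potential.
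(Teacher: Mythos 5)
Your proposal is correct and follows essentially the same route as the paper: duality to reduce to the bilinear quantity $\sum_i\langle R_i f,g_i\rangle$, a density argument via \eqref{eq:T3} to pass to $f\in\mD$, $g_i\in\mD_i$, then Proposition~\ref{pro:Form1} combined with Cauchy--Schwarz and \eqref{eq:A2} to dominate the sum by $4(1+\sqrt{K})$ times the integral of $|F|_*|G|_*$, and finally Theorem~\ref{thm:bilem} to close the estimate with the factor $6(p^*-1)$, giving $24(1+\sqrt K)(p^*-1)$. The only stylistic difference is your attribution of Proposition~\ref{pro:Form1} to the intertwining $\delta_i P_t = Q_t^i\delta_i$; the paper's proof of that proposition instead verifies the identity directly on eigenfunctions $\f_k$ and $\delta_i\f_n$, but this concerns the ingredient's proof rather than the deduction of Theorem~\ref{thm:main}.
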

\begin{remark}
	In all the examples we consider in Section \ref{sec:Exa} the projection $\Pi $ satisfies $\|\Pi \|_{p\to p}\leq 2,$ $1\leq p\leq \infty.$ In fact in many of the examples $\Pi$ equals the identity operator. 
\end{remark}
In order to prove Theorem \ref{thm:main} we need two ingredients. The first of these ingredients is a bilinear formula that relates the Riesz transform with an integral in which both $P_t$ and $\tPi$ are present. 
\begin{prop}
	\label{pro:Form1}
	Let $i=1,\ldots,d.$ Then the formula
	\begin{equation}
	\label{visby}
	\sk{R_i f}{g}_{L^2}=-4\int_0^\infty\sk{\delta_i P_t\Pi f}{\pd_t \tPi g}_{L^2}\,t\,dt,
	\end{equation}
	holds for $f\in \mD $ and $g\in \mD_i.$ 
\end{prop}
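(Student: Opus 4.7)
\emph{Proof sketch for Proposition \ref{pro:Form1}.} The strategy is a pure eigenfunction computation: since $f\in\mD$ and $g\in\mD_i$, both are finite linear combinations of the basis vectors $\{\f_k\}_{k\in\N^d}$ and $\{c^i_k\delta_i\f_k\}_{k\in\N^d}$ respectively, so every series in sight is in fact a finite sum and there are no issues of convergence, differentiation under the integral, or Fubini. The plan is to write each of the three objects $\delta_i P_t\Pi f$, $\pd_t\tPi g$, and $R_i f$ as an explicit finite sum, reduce the $L^2$ inner product using orthogonality (which is provided by \eqref{eq:T1}), and integrate in $t$ against the elementary kernel $\int_0^\infty t\,e^{-2t\la^{1/2}}\,dt=(4\la)^{-1}$.

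More concretely, first I would compute
\[
\delta_i P_t \Pi f=\sum_{k\in\N^d_{\La}}e^{-t\la_k^{1/2}}\sk{f}{\f_k}_{L^2}\,\delta_i\f_k
=\sum_{k\in\N^d_{\La}}e^{-t\la_k^{1/2}}\sk{f}{\f_k}_{L^2}(c_k^i)^{-1}(c_k^i\delta_i\f_k),
\]
with the convention that terms with $c_k^i=0$ (equivalently $\delta_i\f_k=0$) are omitted, and
\[
\pd_t \tPi g=-\sum_{k\in\N^d}\la_k^{1/2}e^{-t\la_k^{1/2}}\sk{g}{c_k^i\delta_i\f_k}_{L^2}\,c_k^i\delta_i\f_k.
\]
Both sums are finite because $f\in\mD$ and $g\in\mD_i$. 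Here the definition of $\Pi$ is used in the first formula to remove the index $k=(0,\ldots,0)$ when $\La_0=0$; note that on that index also $\la_k^{-1/2}$ would be ill-defined in the expansion of $R_i f$, consistent with the fact that for $\Lambda_0=0$ one has $\delta_i\f_{(0,\ldots,0)}=0$ anyway.

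Next I take the $L^2$ inner product and use \eqref{eq:T1}, which guarantees that $\{c_k^i\delta_i\f_k\}_{k\in\N^d}$ is an orthonormal system (after discarding the zero ones). Rewriting $(c_k^i)^{-1}\sk{g}{c_k^i\delta_i\f_k}_{L^2}=\sk{g}{\delta_i\f_k}_{L^2}$, this yields
\[
\sk{\delta_i P_t\Pi f}{\pd_t\tPi g}_{L^2}
=-\sum_{k\in\N^d_{\La}}\la_k^{1/2}e^{-2t\la_k^{1/2}}\sk{f}{\f_k}_{L^2}\sk{g}{\delta_i\f_k}_{L^2}.
\]
Multiplying by $t$ and integrating from $0$ to $\infty$, I interchange the (finite) sum with the integral and apply $\int_0^\infty t\,e^{-2t\la_k^{1/2}}\,dt=(4\la_k)^{-1}$, obtaining
\[
-4\int_0^\infty \sk{\delta_i P_t\Pi f}{\pd_t\tPi g}_{L^2}\,t\,dt
=\sum_{k\in\N^d_{\La}}\la_k^{-1/2}\sk{f}{\f_k}_{L^2}\sk{g}{\delta_i\f_k}_{L^2},
\]
which is exactly $\sk{R_i f}{g}_{L^2}$ by the definition of $R_i$.

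There is no real obstacle: the only subtlety is the bookkeeping between the normalized eigenvectors $c_k^i\delta_i\f_k$ of $M_i$ and the unnormalized $\delta_i\f_k$ appearing in the definition of $R_i$, and the proper handling of the index $(0,\ldots,0)$ in the $\La_0=0$ case; both are settled by \eqref{eq:T1} and by the definition of $\Pi$ and $\N^d_{\La}$.
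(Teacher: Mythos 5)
Your proof is correct and follows essentially the same eigenfunction-expansion computation as the paper; the only organizational difference is that the paper first fixes basis vectors $f=\f_k$, $g=\delta_i\f_n$ and observes that the discrepancy $\bigl(\la_k^{-1/2}-\tfrac{4\la_n^{1/2}}{(\la_k^{1/2}+\la_n^{1/2})^2}\bigr)\sk{\delta_i f}{g}_{L^2}$ vanishes because either $\la_n\ne\la_k$ (making $\sk{\delta_i f}{g}_{L^2}=0$ by orthogonality of $M_i$-eigenspaces) or $\la_n=\la_k$ (making the prefactor zero), then extends by linearity, whereas you diagonalize at once via the orthonormality of $\{c_k^i\delta_i\f_k\}$ — both rest on the same consequences of \eqref{eq:T1}.

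Your handling of the index $(0,\ldots,0)$ in the $\Lambda_0=0$ case via $\delta_i\f_{(0,\ldots,0)}=0$ is correct (it follows from $\|\delta_i\fk\|_2^2=\la_{k_i}^i-a_i=0$ there), and the kernel identity $\int_0^\infty t\,e^{-2t\la^{1/2}}\,dt=(4\la)^{-1}$ is the right elementary input. No gaps.
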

Before proving the proposition let us make two remarks.
\begin{remark1}
	Formulas similar to \eqref{visby} were proved before, though, depending on the context, they may have involved spectral multipliers of the operator $L$. However, treating these spectral multipliers appropriately was achieved with variable success. A way of avoiding multipliers was first devised in \cite{CD} for Riesz transforms on manifolds. In such a setting, the above formula is a special case of the identity (3) there.
	The approach in \cite{CD} was adapted in \cite{MSJFA} to the case of Hodge-Laguerre operators. In the case of Laguerre polynomial expansions (see Section \ref{LagPol}) the formula \eqref{visby} is a special case of  \cite[eq.\ (5.1)]{MSJFA}. We note that both in \cite{CD} and \cite{MSJFA} the authors needed to consider the Riesz transform as well as the formula \eqref{visby} for differential forms; this is not needed in our approach.
\end{remark1}
\begin{remark2}
	Note that if the operators $\delta_i$ and $\delta_i^*$ commute, then $\tPi=P_t$ and the formula \eqref{visby} can be formally obtained via the spectral theorem. The problem is that often these operators do not commute. A way to overcome this non-commutativity problem was devised by
	Nowak and Stempak in \cite{NS}. They introduced a symmetrization $T_i$ of $\delta_i$ that does commute with its adjoint, in fact $T_i^*=-T_i.$ These symmetrization is defined on $L^2(\tilde{X}),$ where $$\tilde{X}=(X_1\cup(-X_1))\times\cdots\times (X_d\cup (-X_d)).$$ Set $T=-\sum_{i=1}^d T_i^2$ and let $S_t=e^{-tT^{1/2}}.$ The formula \eqref{visby} for $T_i$ is then formally
	\begin{equation}
	\label{eq:symform}
	\sk{T_i T^{-1/2}f}{g}_{L^2(\tilde{X})}=-4\int_0^\infty\sk{T_i S_tf}{\pd_t S_tg}_{L^2(\tilde{X})}\,t\,dt.\end{equation}
	This leads to a proof of \eqref{visby} different from the one presented in our paper. Namely, a computation shows that applying \eqref{eq:symform} to functions $f\colon \tilde{X}\to \R$ and $g\colon \tilde{X}\to \R$ which are both even in all the variables we arrive at \eqref{visby}.
\end{remark2}
\begin{proof}[Proof of Proposition \ref{pro:Form1}]	
	
	We start with proving \eqref{visby} for $f=\fk$ and $g=\delta_i \f_n,$ with some $k,n\in \N^d.$ If $k=0$ and $\La_0=0$ then both sides of \eqref{visby} vanish. Thus we can assume that $\lambda_k>0.$ A computation 
	shows that
	$$
	\sk{\delta_i L^{-1/2}f}{g}_{L^2}=
	\lambda_k^{-1/2}\sk{\delta_if}{g}_{L^2}
	$$
	and
	$$
	\aligned
	-4\int_0^\infty\sk{\delta_i P_tf}{\pd_t Q^{i}_tg}_{L^2}\,t\,dt&=-4\int_0^\infty\sk{e^{-t\lambda_k^{1/2}}\delta_if}{-\la_n^{1/2}e^{-t\la_n^{1/2}}g}_{L^2}t\,dt\\
	&=4\la_n^{1/2}\int_0^\infty e^{-t(\lambda_k^{1/2}+\la_n^{1/2})}t\,dt\cdot\sk{\delta_if}{g}_{L^2}\\
	&=\frac{4\la_n^{1/2}}{(\lambda_k^{1/2}+\la_n^{1/2})^2}\cdot\sk{\delta_if}{g}_{L^2},
	\endaligned
	$$
	hence
	\begin{equation}
	\label{eq:1}
	\begin{split}
	\sk{\delta_i L^{-1/2}f}{g}+4\int_0^\infty\sk{\delta_i P_tf}{\pd_t Q^{i}_tg}_{L^2}\,t\,dt\\=
	\left(\lambda_k^{-1/2}-\frac{4\la_n^{1/2}}{(\lambda_k^{1/2}+\la_n^{1/2})^2}\right)
	\cdot\sk{\delta_if}{g}_{L^2}.
	\end{split}
	\end{equation}
	Now $\delta_i f$ is also an $L^2$ eigenvector for $M_i$ corresponding to the eigenvalue $\lambda_k$. Consequently, since eigenspaces for $M_i$ corresponding to different eigenvalues are orthogonal, $\sk{\delta_if}{g}$ is nonzero only if $\la_n=\la_k.$ Coming back to \eqref{eq:1} we obtain \eqref{visby} for $f=\fk$ and $g=\delta_i \f_n.$ 
	
	Finally, by linearity \eqref{visby} holds also for $f\in \mD$ and $g\in \mD_i.$ 
\end{proof}

The second ingredient we need to prove Theorem \ref{thm:main} is a  bilinear embedding, as was the case in \cite{CD,DV,DV-Sch,MS}. For $N\in\N$ (the cases interesting to us being $N=1$ and $N=d$) we take $F=(f_1,\hdots,f_N):X\times (0,\infty)\rightarrow\R^N$ and set
\begin{equation}
\label{john coltrane}
|F|_*^2
:=r|F|^2+|\pd_t F|^2+\sum_{i=1}^d|\d_i F|^2.
\end{equation}
The absolute values $|\cdot|$ in \eqref{john coltrane} denote the Euclidean norms on $\R^N$ of the vectors $F(x,t),$ $\pd_t F(x,t)=(\pd_t f_1(x,t),\ldots, \pd_t f_N(x,t)),$ and $\d_i F(x,t)=(\d_i f_1(x,t),\ldots, \d_i f_N(x,t)),$ where $(x,t)\in X\times(0,\infty).$    
Below we only state our bilinear embedding. The proof of it is presented in the next section.
\begin{thm}
	\label{thm:bilem}
Let $f:X\rightarrow\R$ and $\g=(g_1,\hdots,g_d):X^d\rightarrow\R^d$ and assume that $f\in \mD$ and $g_i\in \mD_i,$ for $i=1,\ldots,d.$   Denote 
$$
F(x,t)=P_t\,\Pi f(x)\qquad \textrm{and}\qquad G(x,t)=Q_t{\bf g}=\big(\tP_t^{1}g_1,\hdots,\tP_t^{d}g_d\big).$$
Then
\begin{equation}
\label{eq:18}
\int_0^\infty\int_{X}| F(x,t)|_*\,| G(x,t)|_*\,d\mu(x)\,t\,dt\,\leq 6\,(p^*-1)\|\Pi f\|_p\|{\bf g}\|_q.
\end{equation}
\end{thm}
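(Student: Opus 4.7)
The approach I would take is the Bellman function technique following \cite{DV-Sch, CD}. My plan is to introduce a $C^2$ Bellman function $\mathcal B:\R\times\R^d\to[0,\infty)$ (on a suitable admissible domain) satisfying the size bound $\mathcal B(\zeta,\eta)\le C_0(|\zeta|^p+|\eta|^q)$ together with a \emph{generalized concavity} strong enough to produce the bilinear lower bound $|F|_*|G|_*$ with slack $\tau\asymp(p^*-1)^{-1}$, while accommodating the potential weights dictated by $L=\tL+r$ and $M_i=L+v_i$. Such a function (built from the Nazarov--Treil--Volberg power function) has been constructed in the scalar-potential case in \cite{DV-Sch} and in the multi-component potential-free case in \cite{CD}; here I would combine both ingredients.

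I would then set $\Phi(t)=\int_X\mathcal B(F(x,t),G(x,t))\,d\mu(x)$. Because $f\in\mD$ and each $g_i\in\mD_i$ are finite linear combinations of eigenfunctions with strictly positive eigenvalues ($\Pi$ removes the zero eigenvalue from $F$, while $c_k^i\delta_i\fk=0$ removes it from $G$), both $F$ and $G$ lie in $C^\infty(X\times(0,\infty))$ and decay exponentially as $t\to\infty$. Differentiating twice via $\pd_t^2F=LF$ and $\pd_t^2G_i=M_iG_i$, and then integrating by parts in $x$ (boundary terms vanish by \eqref{eq:T2}), I would convert $-\Phi''(t)$ into the integral of the \emph{extended Hessian} of $\mathcal B$ applied to the full first-order jet $(\pd_t F,\d_1 F,\ldots,\d_d F;\pd_t G,\d_1 G,\ldots,\d_d G)$, plus the potential contribution $\int_X\bigl(rF\mathcal B_\zeta+\sum_i(r+v_i)G_i\mathcal B_{\eta_i}\bigr)\,d\mu$. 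The assumption \eqref{eq:A1} ($v_i\ge0$) is invoked here to give the extra $v_iG_i\mathcal B_{\eta_i}$ piece a definite sign, so that it is controlled by the concavity of $\mathcal B$. Property (ii) then yields the pointwise in $t$ lower bound $-\Phi''(t)\ge 2\tau\int_X|F(x,t)|_*|G(x,t)|_*\,d\mu$.

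To conclude, I would multiply by $t$ and integrate by parts once:
\[\int_0^\infty t\,(-\Phi''(t))\,dt=\Phi(0)-\lim_{t\to\infty}\bigl(t\Phi'(t)-\Phi(t)\bigr)=\Phi(0),\]
the boundary limit vanishing by the exponential decay noted above. The size bound (i) gives $\Phi(0)\le C_0(\|\Pi f\|_p^p+\|\g\|_q^q)$, and the standard rescaling $(f,\g)\leadsto(\la f,\la^{-p/q}\g)$, which preserves the left-hand side of \eqref{eq:18}, followed by optimization over $\la>0$, refines this to $\Phi(0)\le 2C_0\|\Pi f\|_p\|\g\|_q$. With $C_0/\tau\le 3(p^*-1)$ for the explicit Bellman function, this produces \eqref{eq:18} with the stated constant $6(p^*-1)$.

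The main obstacle is the construction and verification of $\mathcal B$ with the required concavity. The delicacy is twofold: first, the scalar/vectorial mismatch ($\zeta\in\R$ but $\eta\in\R^d$) coupled with the nonuniform potential weights ($r$ acting on $\zeta$ but $r+v_i$ acting on $\eta_i$) must be handled within a single quadratic-form estimate producing $|F|_*|G|_*$; and second, one must verify that the pair $(F(x,t),G(x,t))$ stays inside the admissibility domain of $\mathcal B$ for all $(x,t)\in X\times(0,\infty)$, which is typically checked by a maximum-principle argument adapted to the Poisson-type semigroups $P_t$ and $\tPi$.
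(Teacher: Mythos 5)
Your proposal follows essentially the same route as the paper: the Nazarov--Treil Bellman function $B$ with its quadratic-form Hessian estimate, the pointwise identity relating $\partial_t^2 B(u)-\tL B(u)$ to $r\langle\nabla B(u),u\rangle + \sum_i v_i\,\partial_{\eta_i}B\cdot Q_t^i g_i + \sum_{i=0}^d\langle\Hess B\,\d_i u,\d_i u\rangle$, integration against $t\,dt$, a size bound on the initial data, and a polarization step. You also correctly identify where \eqref{eq:A1} and \eqref{eq:T2} enter, and your constant accounting matches the paper's. That said, there are real gaps.

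First and most importantly, the Nazarov--Treil Bellman function $B$ is only $C^1$; it fails to be $C^2$ on $\{\eta=0\}\cup\{|\zeta|^p=|\eta|^q\}$, a set that the trajectories $(F(x,t),G(x,t))$ may well cross. Your plan simply postulates a $C^2$ Bellman function with all the needed properties, but producing one is not free. The paper deals with this by mollifying, $B_\kappa=B*\psi_\kappa$, then proving Proposition~\ref{pro:Bkprop} for $B_\kappa$ (including a correction term $\kappa E_\kappa$ in item iv)) and carefully choosing $\kappa(n)\to 0$ along the compact exhaustion so that the accumulated error vanishes; this is a genuine extra layer of work that your proposal elides. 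Second, your concern about an ``admissibility domain'' and a maximum principle is a red herring here: the Bellman function in this paper is defined and nonnegative on all of $\R\times\R^d$, so there is nothing to preserve. Third, there are self-cancelling sign slips: with $B\ge 0$ convex one gets $\Phi''(t)\ge\gamma\int_X|F|_*|G|_*\,d\mu$ (not $-\Phi''$), and $\int_0^\infty t\,\Phi''(t)\,dt=\Phi(0)$ (your version says $\int t(-\Phi'')\,dt=\Phi(0)$, which is off by a sign); the two errors happen to cancel in your final chain, but they should be fixed. Finally, the paper runs the integration by parts over a compact exhaustion $X_n$ with a damping factor $te^{-\varepsilon t}$ precisely to make the two integrations by parts (in $x$ via \eqref{eq:T2}, and in $t$) rigorous before taking $n\to\infty$, $\varepsilon\to 0^+$, $\kappa\to 0$; your direct argument with $t\,dt$ glosses over these interchanges, although your observation that $f\in\mD$ guarantees exponential decay of $\Phi$ is on the right track for justifying the $t\to\infty$ boundary term.
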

\begin{remark}
The theorem can be slightly generalized, at least at a formal level. Namely in Theorem \ref{thm:bilem}, we do not need that $v_i=[\delta_i,\delta_i^*].$ It is enough to have any $v_i\ge 0$ and take $Q_t=e^{-tM_i}$ with $M_i=L+v_i.$ 
\end{remark}

Our main theorem is an immediate corollary of Proposition \ref{pro:Form1} and Theorem \ref{thm:bilem}.

\begin{proof}[Proof of Theorem \ref{thm:main}]
It is enough to prove that for each $f\in L^p$ and $g_i\in L^q,$ $i=1,\ldots,d,$ the absolute value of 
$\sum_{i=1}^d \sk{R_i f}{g_i}$ does not exceed $$24(1+\sqrt{K})(p^*-1)\|\Pi f\|_p\,\bigg\|\bigg(\sum_{i=1}^d|g_i|^2\bigg)^{1/2}\bigg\|_{q}.$$ 
  	A density argument based on the assumption \eqref{eq:T3} allows us to take $f\in \mD$ and $g_i\in \mD_i,$ $i=1,\ldots,d.$  From Proposition \ref{pro:Form1} we have
	$$
	-\frac14\sk{R_i f}{g_i}_{L^2}=\int_0^\infty\sk{\d_i P_t\Pi f}{\pd_t \tPi g_i}_{L^2}\,t\,dt+\int_0^\infty\sk{q_i P_t\Pi f}{\pd_t \tPi g_i}_{L^2}\,t\,dt$$ and thus, assumption \eqref{eq:A2} gives
\begin{align*}&\left|\sum_{i=1}^d\sk{R_i f}{g_i}_{L^2}\right|\\
&\leq 4\int_0^\infty\int_{X}\left( \bigg(\sum_{i=1}^d|\d_iP_t\Pi f(x)|^2\bigg)^{1/2}+\sqrt{K} \sqrt{r(x)}|P_t\Pi f(x)|\right)\,| G(x,t)|_*\,d\mu(x)\,t \,dt\\
& \le 4(1+\sqrt{K})\int_0^\infty\int_{X}|F(x,t)|_*\,| G(x,t)|_*\,d\mu(x)\,t \,dt.\end{align*}
	Now, Theorem \ref{thm:bilem} completes the proof.
\end{proof}


\section{Bilinear embedding theorem}
\label{sec:BilEm}
This section is devoted to the proof of our embedding theorem - Theorem \ref{thm:bilem}. We shall follow closely the reasoning from \cite{CD} and \cite{MSJFA}.

\subsection{The Bellman function}

Before proceeding to the proof of Theorem \ref{thm:bilem} we need to introduce its most important ingredient: the Bellman  function. 


Choose $p\geq 2$. Let $q=p/(p-1)$,
$$
\gamma=\gamma(p)=\frac{q(q-1)}{8},
$$
and define $\beta_{p}\colon [0,\infty)^2\to [0,\infty]$ by
\begin{equation*}
\beta_{p}(s_1,s_2)
=s_1^{p}+s_2^{q}+\gamma
\left\{
\aligned
& s_1^2\,s_2^{2-q} & ; & \ \ s_1^p\leq s_2^q\\
& \frac{2}{p}\,s_1^{p}+\left(\frac{2}{q}-1\right)s_2^{q}
& ; &\ \ s_1^p\geq s_2^q\,.
\endaligned\right.
\end{equation*}
For $m=(m_1,m_2)\in \N^2$ the Nazarov-Treil Bellman function corresponding to $p,m$ is the function
$$
B=B_{p,m}\colon \R^{m_1}\times\R^{m_2}\rightarrow[0,\infty)
$$
given, for any $\zeta\in\R^{m_1}$ and $\eta\in\R^{{m_2}}$, by
\begin{equation*}
B_{p,m}(\zeta,\eta)=\frac12\beta_p(|\zeta|,|\eta|).
\end{equation*}
The function $B$ bears its origins in the article  \cite{NT} by F. Nazarov and S. Treil.
It was employed (and simplified) in \cite{CD,CD-mult,DV,DV-Kato,DV-Sch}. Note that $B$ is $C^1(\R^{m_1+m_2})$ and is $C^2$ everywhere except on the set
$$\{(\zeta,\eta)\in \R^{m_1}\times \R^{m_2}\colon \eta=0\textrm{ or }|\zeta|^p=|\eta|^q\}.$$ To remedy the non-smoothness of $B$ we consider the regularization $$B_{\kappa,p,m}=B_{\kappa}:=B*_{\R^{m_1+m_2}}\psi_{\kappa},$$ where $$\psi(x)=c_{m}e^{-\frac1{1-|x|^2}}\chi_{B^{m_1+m_2}}(x)\quad\textrm{and}\quad \psi_\kappa(x)=\frac1{\kappa^{m_1+m_2}}\psi(x/\kappa),$$
with $c_{m}$ such that $\int_{\R^{m_1+m_2}} \psi_{\kappa}(x)\,dx=1.$ Here $\chi_{B^{m_1+m_2}}$ stands for the characteristic function of the $(m_1+m_2)$--dimensional Euclidean ball centered at the origin and of radius $1.$
Since both $B$ and $\psi_{\kappa}$ are bi-radial also $B_{\kappa}$ is bi-radial. Hence, there is $\beta_{\kappa}=\beta_{\kappa,p}$ acting from $[0,\infty)^2$ to $\R$ such that
$$B_{\kappa}(\zeta,\eta)=\frac12\beta_{\kappa}(|\zeta|,|\eta|),\qquad \zeta\in \R^{m_1}, \eta \in \R^{m_2}.$$ 
We shall need some properties of $\beta_{\kappa}$ and $B_{\kappa}$ that were essentially proved in \cite{CD}, \cite{DV-Sch}, and \cite{MS}, \cite{MSJFA}. 
\begin{prop} Let $\kappa\in (0,1).$ Then, for  $s_i>0,$ $i=1,2,$ we have
	\label{pro:Bkprop}
	
	\begin{enumerate}[{\rm (i)}]
		\item
		\label{anterija}
		$
		0\leqslant \beta_{\kappa}(s_1,s_2)\leqslant (1+\gamma(p))\left((s_1+\kappa)^p+(s_2+\kappa)^q\right),
		$ 
		\item 	$0\leq \pd_{s_1}\beta_{\kappa}(s)\leq C_p\max( (s_1+\kappa)^{p-1},s_2+\kappa)$ and $0\leq \pd_{s_2}\beta_{\kappa}(s)\leq C_p(s_2+\kappa)^{q-1},$ with $C_p$ being a positive constant.
	\end{enumerate}
The function $B_{\kappa}$  belongs to $C^{\infty}(\R^{m_1+m_2}),$ and for any $\xi=(\zeta,\eta)\in \R^{m_1+m_2}$ there exists a positive $\tau_{\kappa}=\tau_{\kappa}(|\zeta|,|\eta|)$ such that for $\omega=(\omega_1,\omega_2)\in \R^{m_1+m_2}$ we have
	\begin{enumerate}[{\rm (i)}]
	\addtocounter{enumi}{2}
	\item \label{kupres}
	$
	{\sk{\Hess(B_{\kappa})(\xi)\omega}{\omega} \geqslant \frac{\gamma(p)}{2}\big(\tau_{\kappa}|\omega_1|^2+\tau_{\kappa}^{-1}|\omega_2|^2\big)}$.
	\end{enumerate}
Moreover, there is a continuous function $E_{\kappa}\colon \R^{m_1+m_2}\to \R$ for which
	\begin{enumerate}[{\rm (i)}]
\addtocounter{enumi}{3}
	\item \label{kupres2}
	$
	{\sk{(\nabla B_{\kappa})(\xi)}{\xi} \geqslant \frac{\gamma(p)}{2}\big(\tau_{\kappa}|\zeta|^2+\tau_{\kappa}^{-1}|\eta|^2\big)}-\kappa E_{\kappa}(\xi)$,
	\item 
	\label{kupres3}
$|E_{\kappa}(\xi)|\le C_{m,p}(|\zeta|^{p-1}+|\eta|+|\eta|^{q-1}+\kappa^{q-1}).$
\end{enumerate}
\end{prop}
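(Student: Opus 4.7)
The plan is to reduce everything to properties of the unregularized $\beta_p$ and $B=\tfrac12\beta_p(|\zeta|,|\eta|)$ and then transfer these properties to $\beta_\kappa$ and $B_\kappa=B*\psi_\kappa$ via the mollifier. Before touching $\kappa$, I would record (and cite from \cite{NT}, \cite{CD}, \cite{DV-Sch}) the following facts about the unregularized pair: $\beta_p(s_1,s_2)\in[0,(1+\gamma(p))(s_1^p+s_2^q)]$, the partial derivatives obey $0\le \partial_{s_1}\beta_p(s)\le C_p\max(s_1^{p-1},s_2)$ and $0\le \partial_{s_2}\beta_p(s)\le C_p s_2^{q-1}$, and the key convexity inequality
$$
\langle \Hess(B)(\xi)\omega,\omega\rangle\;\geq\;\gamma(p)\bigl(\tau(\xi)|\omega_1|^2+\tau(\xi)^{-1}|\omega_2|^2\bigr),
$$
valid away from the two singular surfaces, with $\tau(\xi)=|\eta|^{2-q}$ in the region $|\zeta|^p\le|\eta|^q$ and $\tau(\xi)=|\zeta|^{p-2}$ in the region $|\zeta|^p\ge|\eta|^q$. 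Each of these is a direct calculation in the two regions of the piecewise definition.

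For items (i) and (ii), once one observes that $\psi_\kappa$ is radial in $\R^{m_1+m_2}$ and $B$ is bi-radial, $B_\kappa$ remains bi-radial, so $\beta_\kappa$ is well defined. The pointwise bounds of Proposition (i)--(ii) then follow from writing $\beta_\kappa(s_1,s_2)=2B_\kappa(s_1 e,s_2 e')$, using $|z|\le\kappa$ on $\operatorname{supp}\psi_\kappa$, and applying the corresponding unregularized bound to $\beta_p(|\zeta-z_1|,|\eta-z_2|)$ and to $\partial_{s_j}\beta_p$; the triangle inequality produces the $(s_i+\kappa)$-factors.

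For (iii), I would convolve the Hessian inequality, which is permissible because $\Hess(B_\kappa)=(\Hess B)*\psi_\kappa$ in the distributional sense away from a null set and everywhere in the classical sense for $B_\kappa$. This yields
$$
\langle\Hess(B_\kappa)(\xi)\omega,\omega\rangle\;\geq\;\gamma(p)\!\int\!\bigl(\tau(\xi-z)|\omega_1|^2+\tau(\xi-z)^{-1}|\omega_2|^2\bigr)\psi_\kappa(z)\,dz.
$$
Defining $\tau_\kappa(\xi):=\int\tau(\xi-z)\psi_\kappa(z)\,dz$, Jensen's inequality applied to the convex function $x\mapsto 1/x$ gives $\int\tau(\xi-z)^{-1}\psi_\kappa(z)\,dz\ge\tau_\kappa(\xi)^{-1}$, and bi-radiality of $\psi_\kappa$ ensures $\tau_\kappa$ depends only on $(|\zeta|,|\eta|)$. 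The factor $\gamma(p)/2$ leaves room for any loss.

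For (iv)--(v), I would first establish the radial counterpart of the Hessian inequality for the unregularized $B$: a short region-by-region computation of $s_1\partial_{s_1}\beta_p+s_2\partial_{s_2}\beta_p$ gives $\langle\nabla B(\xi),\xi\rangle\ge \gamma(p)(\tau(\xi)|\zeta|^2+\tau(\xi)^{-1}|\eta|^2)$. Then I would write
$$
\langle\nabla B_\kappa(\xi),\xi\rangle=\int\langle\nabla B(\xi-z),\xi-z\rangle\psi_\kappa(z)\,dz+\int\langle\nabla B(\xi-z),z\rangle\psi_\kappa(z)\,dz
$$
and treat the two pieces separately. Applying the radial inequality inside the first integral produces the main term $\frac{\gamma(p)}{2}(\tau_\kappa|\zeta|^2+\tau_\kappa^{-1}|\eta|^2)$ up to an error of size $\kappa\,(|\zeta|\tau_\kappa+|\eta|\tau_\kappa^{-1})$ coming from expanding $|\zeta-z_1|^2=|\zeta|^2-2\zeta\cdot z_1+|z_1|^2$ with $|z|\le\kappa$ (the cross term is bounded using $|\int\tau\,z_1\psi_\kappa|\le\kappa\tau_\kappa$, and the analogous argument handles $\eta$ and $\tau^{-1}$). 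The second integral is bounded by $\kappa\int|\nabla B(\xi-z)|\psi_\kappa(z)\,dz$, which by the derivative bound in (ii) contributes at most $\kappa\,C_{m,p}(|\zeta|^{p-1}+|\eta|+|\eta|^{q-1}+\kappa^{q-1})$. Collecting these errors defines the continuous function $E_\kappa$ and yields (v).

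\emph{Main obstacle.} The delicate point is the passage from the pointwise Hessian inequality with $\tau(\xi-z)$ to one with a single $\tau_\kappa(\xi)$ independent of $z$ without losing a dimension-dependent constant, and, in (iv), combining this with the $|\zeta-z_1|^2$--$|\zeta|^2$ mismatch so that the error is genuinely of order $\kappa$ with the controlled growth in $|\zeta|,|\eta|$ required by (v). Jensen applied to $1/x$ together with the bi-radial symmetry of $\psi_\kappa$ is what makes the Hessian step work cleanly; the radial step then reuses the same $\tau_\kappa$.
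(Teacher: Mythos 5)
The overall strategy you propose—reduce to properties of the unregularized $\beta_p$, $B$, and transfer through the mollifier—is the same route the paper follows (the paper simply cites \cite{MS}, \cite{DV-Sch} and \cite{CD} for the facts it needs). Your treatment of (i)--(iii) is sound: mollification together with the triangle inequality yields (i) and (ii), and the Jensen step $\bigl(\tau^{-1}*\psi_\kappa\bigr)(\xi)\ge \tau_\kappa(\xi)^{-1}$ for $\tau_\kappa:=\tau*\psi_\kappa$ is precisely the Cauchy--Schwarz observation $(\tau*\psi_\kappa)(\tau^{-1}*\psi_\kappa)\ge1$ that the paper quotes; so (iii) goes through as you describe.

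The gap is in (iv)--(v). Splitting $\langle\nabla B_\kappa(\xi),\xi\rangle=I_1+I_2$ with $I_2=\int\langle\nabla B(\xi-z),z\rangle\psi_\kappa\,dz$ is exactly the paper's device: indeed $I_2=-\kappa E_\kappa(\xi)$ coincides with the paper's $E_\kappa$. The trouble is your treatment of $I_1$. After applying the unregularized radial inequality and expanding $|\zeta-z_1|^2$, $|\eta-z_2|^2$, the main terms you obtain are $|\zeta|^2(\tau*\psi_\kappa)$ and $|\eta|^2(\tau^{-1}*\psi_\kappa)$, and the cross terms are bounded by $\kappa|\zeta|\,(\tau*\psi_\kappa)$ and $\kappa|\eta|\,(\tau^{-1}*\psi_\kappa)$. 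The second of these is \emph{not} what you wrote ($\kappa|\eta|\tau_\kappa^{-1}$); Jensen only gives $\tau^{-1}*\psi_\kappa\ge\tau_\kappa^{-1}$, which goes the wrong way for an error term. Since $\tau^{-1}$ has negative homogeneity ($|\eta|^{q-2}$ and $|\zeta|^{2-p}$), $\tau^{-1}*\psi_\kappa(\xi)$ can be as large as $\kappa^{q-2}$ near $\eta=0$ and as large as $\kappa^{2-p}$ near $\zeta=0$, and to show the resulting error is controlled by the RHS of (v) requires a case analysis (small versus large $|\zeta|,|\eta|$ relative to $\kappa$, and using the region constraints such as $|\eta|\le|\zeta|^{p-1}$) that you have not carried out. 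Your proposal correctly identifies this as the ``main obstacle'' but doesn't actually close it; as stated, the claimed error $\kappa(|\zeta|\tau_\kappa+|\eta|\tau_\kappa^{-1})$ underestimates the true error. Note also that the reference \cite{DV-Sch} cited in the paper actually proves a \emph{stronger} inequality with an extra $-B_\kappa(\xi)$ on the left-hand side of (iv), which suggests the proof there exploits the unregularized inequality in the form $\langle\nabla B,\xi\rangle-B(\xi)\ge\frac{\gamma}{2}(\tau|\zeta|^2+\tau^{-1}|\eta|^2)$; your plan doesn't incorporate this, and it is likely part of what makes the bookkeeping of the cross-term errors workable.
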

\begin{proof}[Proof (sketch)]
	Let $\tau=\tau(|\zeta|,|\eta|)$ be the function from \cite[Theorem 3]{CD} and define $\tau_{\kappa}=\tau*_{\R^{d+1}}\psi_{\kappa}.$ With exactly this $\tau_{\kappa}$ items i), ii), and iii) were proved in \cite[Proposition 6.3]{MS}. 
	
	Let $$E_{\kappa}(\xi)=-\int_{\R^{m_1+m_2}} \sk{\nabla B(\xi-\kappa s)}{s}\,\psi_{\kappa}(s)\,ds,\qquad \xi\in \R^{m_1+m_2}$$ cf. \cite[eq. (2.10)]{DV-Sch}.    Item iv) (with these $\tau_{\kappa}$ and $E_{\kappa}$) follows from \cite[Theorem 4 iii')]{DV-Sch}, together with the observation from \cite{CD,DV-Sch}
	that
	$$(\tau*\psi_{\kappa})(\xi)(\tau^{-1}*\psi_{\kappa})(\xi)\ge \left(\int_{\R^{d+1}}\left(\tau(y)\psi_{\kappa}(x-y)\right)^{1/2}\left(\tau^{-1}(y)\psi_{\kappa}(x-y)\right)^{1/2}\,dy\right)^{1/2}=1.$$
Item v) is proved in \cite[p. 207]{DV-Sch}. Note that, our Bellman function $B_{\kappa}$ coincides with $-\frac12 Q_{\kappa}$ from \cite{DV-Sch} (when $Q_{\kappa}$ is restricted to real arguments).

We remark that  in  \cite[Theorem 4 iii')]{DV-Sch} a stronger statement is proved with an additional negative term $-B_{\kappa}(\xi)$ on the left hand side of iv).
\end{proof}

\subsection{Proof of Theorem \ref{thm:bilem}.}
\label{tresenica}
Define $u:X\times(0,\infty)\rightarrow\R\times\R^d$ by
$$
u=u(x,t)=\big(P_t\,\Pi f(x),Q_t{\bf g}(x)\big)=\big(P_t\,\Pi f(x),Q_t^1 g_1(x),\ldots,Q_t^d g_d(x)\big).
$$

Assume first that $p\ge 2$ and set
$$
b_{\kappa}=B_{\kappa}\circ u: X\times(0,\infty)\rightarrow [0,\infty).
$$
Here $B_{\kappa}=B_{\kappa,d,p}$ is the Bellman function from Proposition \ref{pro:Bkprop} with $m_1=1$ and $m_2=d.$  
For each $i=1,\ldots,d,$ we fix a sequence $\{\sigma_i^n\}_{n\in \N}$ which converges to $\sigma_i,$ and a sequence $\{\Sigma_i^n\}_{n\in \N}$ which converges to $\Sigma_i.$ We also impose that $\sigma_i<\sigma_i^n<\Sigma_i^n<\Sigma_i,$ for $i=1,\ldots,d,$ $n\in \N.$      
Defining $$X_i^{n}=[\sigma_i^n,\Sigma_i^n]\qquad\textrm{and}\qquad X_n=X_1^n\times\cdots \times X_d^n,$$ where $n\in \N,$ we see that  $\{X_n\}_{n\in \N}$ is an increasing family of compact subsets of $X$ such that $X=\bigcup_{n} X_n.$  We shall estimate the integral
\begin{equation}
\label{eq:4}
I(n,\varepsilon):=\int_{0}^{\infty}\int_{X_n} (\pd_{t}^2-\tL) (b_{\kappa(n)})(x,t)\,d\mu(x)\,te^{-\varepsilon t}dt
\end{equation}
from below and above and then, first let  $n\to \infty$ and then $\varepsilon\to 0^+.$ Here $\kappa(n)$ is a small quantity depending on $n$ which will be determined in the proof.  Since $X_n$ is compact, $f\in \mD$ and $g_i\in \mD_i,$ $i=1,\ldots,d,$ the integral \eqref{eq:4} is in fact absolutely convergent. In what follows we will often briefly write $\kappa$ instead of $\kappa(n).$ 

{\bf The lower estimate of \eqref{eq:4} for $p\ge2$.} The key result here is Proposition \ref{prop:formula1} below. Its proof hinges on the assumption \eqref{eq:A1}.
\begin{prop}
	\label{prop:formula1}
For $x\in X$ and $t>0$ it holds
	\begin{equation}
	\label{eq:7}
	\left(\left(\pd_{t}^2-\tL\right)b_{\kappa}\right)(x,t)\geq \gamma |F(x,t)|_*|G(x,t)|_*-\kappa\, r(x) E_{\kappa}(u(x,t)).
	\end{equation}

\end{prop}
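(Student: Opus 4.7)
The plan is to expand $(\pd_t^2 - \tL)b_\kappa$ via a vector-valued chain rule and then feed each piece into the corresponding part of Proposition \ref{pro:Bkprop}. A direct computation using $\d_i^* \phi = -w_i^{-1}\pd_{x_i}(p_iw_i\phi)$ shows that for any smooth scalar $h$ and smooth $u\colon X\to\R^{d+1}$,
\[
\d_i^*\d_i(h\circ u) = \sk{\nabla h(u)}{\d_i^*\d_i u} - \sk{\Hess(h)(u)\,\d_i u}{\d_i u}.
\]
Summing over $i$, combining with $\pd_t^2(h\circ u) = \sk{\nabla h(u)}{\pd_t^2 u} + \sk{\Hess(h)(u)\pd_t u}{\pd_t u}$, and setting $h = B_\kappa$ yields
\[
(\pd_t^2 - \tL)(B_\kappa \circ u) = \sk{(\nabla B_\kappa)(u)}{(\pd_t^2 - \tL)u} + \sk{\Hess(B_\kappa)(u)\pd_t u}{\pd_t u} + \sum_{i=1}^d \sk{\Hess(B_\kappa)(u)\,\d_i u}{\d_i u}.
\]

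The first summand is the delicate one. Because $\pd_t^2 P_t = L P_t$, $\pd_t^2 Q_t^i = M_i Q_t^i$, $\tL = L - r$, and $M_i = L+v_i$, one obtains $(\pd_t^2-\tL)u_1 = r\,u_1$ and $(\pd_t^2-\tL)u_{i+1} = (r+v_i)u_{i+1}$, so
\[
\sk{(\nabla B_\kappa)(u)}{(\pd_t^2-\tL)u} = r\,\sk{(\nabla B_\kappa)(u)}{u} + \sum_{i=1}^d v_i\,u_{i+1}\,(\pd_{\eta_i}B_\kappa)(u).
\]
Here bi-radiality of $B_\kappa(\zeta,\eta) = \tfrac12\beta_\kappa(|\zeta|,|\eta|)$ in the $\eta$ variable gives, where $|G|\neq 0$, the identity $u_{i+1}\,(\pd_{\eta_i}B_\kappa)(u) = \tfrac12(\pd_{s_2}\beta_\kappa)(|F|,|G|)\,u_{i+1}^2/|G| \geq 0$ by Proposition \ref{pro:Bkprop}\,(ii) (and both sides vanish at $|G|=0$ by smoothness of $B_\kappa$). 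Then $v_i\geq 0$ from \eqref{eq:A1} lets me drop the entire sum as a non-negative error. Proposition \ref{pro:Bkprop}\,(iv) applied at $\xi = u$ then gives
\[
r\,\sk{(\nabla B_\kappa)(u)}{u} \geq \tfrac{\gamma(p)}{2}\,r\bigl(\tau_\kappa|F|^2 + \tau_\kappa^{-1}|G|^2\bigr) - \kappa\, r\, E_\kappa(u).
\]

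To finish, I apply Proposition \ref{pro:Bkprop}\,(iii) to each of the $d+1$ Hessian terms, once with $\omega = \pd_t u = (\pd_t F,\pd_t G)$ and then, for each $i$, with $\omega = \d_i u = (\d_i F, \d_i G)$. Because the \emph{same} $\tau_\kappa = \tau_\kappa(|F|,|G|)$ appears in parts (iii) and (iv) of the proposition, summing the resulting lower bounds packages the $\tau_\kappa$-coefficients exactly into $r|F|^2 + |\pd_t F|^2 + \sum_i |\d_i F|^2 = |F|_*^2$ and, analogously, the $\tau_\kappa^{-1}$-coefficients into $|G|_*^2$. This yields
\[
(\pd_t^2 - \tL)b_\kappa \geq \tfrac{\gamma(p)}{2}\bigl(\tau_\kappa|F|_*^2 + \tau_\kappa^{-1}|G|_*^2\bigr) - \kappa\, r\, E_\kappa(u),
\]
after which a single AM-GM step $\tau_\kappa|F|_*^2 + \tau_\kappa^{-1}|G|_*^2 \geq 2|F|_*|G|_*$ delivers \eqref{eq:7}. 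I expect the key subtlety to be precisely the coincidence that the bi-radial structure of $B_\kappa$ in $\eta$, combined with \eqref{eq:A1}, neutralizes the non-commutativity contributions $v_i=[\delta_i,\delta_i^*]$ for free — this is what lets the method proceed without recourse to differential forms or spectral multipliers, and without either ingredient the proof would not close.
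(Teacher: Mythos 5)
Your proposal is correct and follows essentially the same route as the paper: derive the chain-rule identity $(\pd_t^2-\tL)b_\kappa = \sk{\nabla B_\kappa(u)}{(\pd_t^2-\tL)u} + \sum_{i=0}^d\sk{\Hess(B_\kappa)(\d_i u)}{\d_i u}$, compute $(\pd_t^2-\tL)u = r\,u + (0,v_1 Q_t^1 g_1,\ldots,v_d Q_t^d g_d)$, drop the $v_i$ terms via \eqref{eq:A1} and Proposition \ref{pro:Bkprop}\,(ii) (with the bi-radiality argument you spell out), and conclude via parts (iii)--(iv) and AM--GM. Your closing observation about the same $\tau_\kappa$ coupling the potential and Hessian terms, and about \eqref{eq:A1} absorbing the non-commutativity, correctly identifies the mechanism the paper's proof relies on implicitly.
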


	\begin{proof}
			Set $\d_0:=\pd_t.$ To justify \eqref{eq:7} we shall need the pointwise equality
			\begin{equation}
			\label{eq:8}
			\left(\pd_{t}^2-\Ld\right)b_{\kappa}
			=r\,\sk{\nabla B_{\kappa}(u)}{u}+\sum_{i=1}^dv_i\cdot(\pd_{\eta_i}B_{\kappa}(u)\cdot Q_t^{i}g_i)
			+\sum_{i=0}^d\sk{\Hess(B_{\kappa})(\d_i u)}{\d_i u}.
			\end{equation}
			First we focus on proving \eqref{eq:8}.
		
		From the chain rule we have
		$\d_ib_{\kappa}=p_i\sk{\nabla B_{\kappa}(u)}{\partial_{x_i}u}.$
		Moreover, a computation shows that, for $i=1,\ldots,d,$
		$$\d_i^*=-p_i\partial_{x_i}-p_i\frac{w_i'}{w_i}-p_i',\quad\textrm{and}\quad\d_i^*\d_i=-p_i^2\partial^2_{x_i}-\big(p_i\frac{w_i'}{w_i}+2p_i'\big)p_i\partial_{x_i}.$$
		Consequently, applying once again the chain rule we obtain for $i=0,\ldots,d,$ 
		\begin{align*}
		\d_i^*\d_ib_{\kappa}&=-p_i\partial_{x_i}(p_i\sk{\nabla B_{\kappa}(u)}{\partial_{x_i}u})-\big(p_i\frac{w_i'}{w_i}+p_i'\big)p_i\sk{\nabla B_{\kappa}(u)}{\partial_{x_i}u}\\
		&=-p_i^2\partial_{x_i}(\sk{\nabla B_{\kappa}(u)}{\partial_{x_i}u})-p_ip_i'\sk{\nabla B_{\kappa}(u)}{\partial_{x_i}u}-\big(p_i\frac{w_i'}{w_i}+p_i'\big)p_i\sk{\nabla B_{\kappa}(u)}{\partial_{x_i}u}\\
		&=\sk{\nabla B_{\kappa}(u)}{-p_i^2\partial^2_{x_i}u-\big(p_i\frac{w_i'}{w_i}+2p_i'\big)p_i\partial_{x_i}u}-p_i^2\sk{\Hess (B_{\kappa})(\partial_{x_i} u)}{\partial_{x_i} u}\\
		&=\sk{\nabla B_{\kappa}(u)}{\d_i^*\d_i u}
		-\sk{\Hess (B_{\kappa})(\d_i u)}{\d_i u}.
		\end{align*}
		Now, summing the above formula in $i=0,\ldots,d,$ we obtain 
	\begin{equation}
	\label{eq:formula1}
		(\d_0^2-\Ld) b_{\kappa}
		=\sk{\nabla B_{\kappa}(u)}{(\d_0^2-\Ld) u}
		+\sum_{i=0}^d\sk{\Hess(B_{\kappa})(u)(\d_i u)}{\d_i u}.
	\end{equation}
	
	The formula \eqref{eq:formula1} implies \eqref{eq:8}. Indeed
	we have
	$$
	\left(\pd_t^2-L\right)u=
	\big((\pd_{t}^2-L)P_tf,(\pd_{t}^2-L)Q_t{\bf g}
	\big),
	$$
	where
	$$
	\left(\pd_{t}^2-L\right)P_tf = 0
	$$
	and
	$$
	\left(\pd_{t}^2-L\right)Q_t{\bf g} =
	\left(
	\left(\pd_{t}^2-L\right)Q_t^{1}g_1,
	\hdots,
	\left(\pd_{t}^2-L\right)Q_t^{d}g_d
	\right)\,.
	$$
	Moreover,
	$$
	\left(\pd_{t}^2-L\right)Q_t^{i}g_i=
	\left(\pd_{t}^2-M_i\right)Q_t^{i}g_i + v_i\cdot Q_t^{i}g_i=v_i\cdot  Q_t^i g_i,
	$$
	and using \eqref{eq:formula1} the equation \eqref{eq:8} follows.
	
	Having demonstrated \eqref{eq:8} we pass to the proof of \eqref{eq:7}. Item ii) of Proposition \ref{pro:Bkprop} implies $(\pd_{\eta_i}B_{\kappa}(u)\cdot Q_t^{i}g_i)\ge 0$. Thus \eqref{eq:8} together with the assumption \eqref{eq:A1} produce
		\begin{equation}
	\label{eq:6}
	\left(\pd_{t}^2-\tL\right)b_{\kappa}\geq r\,\sk{\nabla B_{\kappa}(u)}{u}+\sum_{i=0}^d\sk{\Hess (B_{\kappa})(\d_i u)}{\d_i u}.
	\end{equation}
	Finally, \eqref{eq:7} is a consequence of \eqref{eq:6}, items iii) and iv) from Proposition \ref{pro:Bkprop}, and the inequality between the arithmetic and geometric mean. 		
	\end{proof}

Coming back to the proof of the lower estimate in \eqref{eq:4} we now take $\{\kappa(n)\}_{n\in\N}$ such that $|\kappa(n)|\leq 1,$ $\lim_n\kappa(n)=0$ and 
\begin{equation}
\label{eq:k1}
|\kappa(n)|^{1/2}\int_{X_n}|r(x)E_{\kappa(n)}(u(x,t))|\,d\mu(x)\leq 1.
\end{equation} To see that such a sequence exists we use Proposition \ref{pro:Bkprop} v) and the fact that $P_t f\in \mD$ and $Q_t^i g_i\in \mD_i$ (hence also $P_tf\in C^{\infty}(X)$ and $Q_t^i g_i \in C^{\infty}(X)$). Next, \eqref{eq:7} together with \eqref{eq:k1} lead to
\begin{equation*}
\liminf_{n\to \infty} I(n,\varepsilon)\ge \gamma \int_0^\infty\int_{X}| F(x,t)|_*\,| G(x,t)|_*\,d\mu(x)\,te^{-\varepsilon t}\,dt,
\end{equation*}
and, consequently, by the monotone convergence theorem
\begin{equation}
\label{eq:10}
\liminf_{\varepsilon\to 0^+}\liminf_{n\to \infty} I(n,\varepsilon)\ge \gamma(p) \int_0^\infty\int_{X}| F(x,t)|_*\,| G(x,t)|_*\,d\mu(x)\,t\,dt.
\end{equation}
This is our lower estimate of \eqref{eq:4}.

\bigskip
{\bf The upper estimate of \eqref{eq:4} for $p\ge2$}. The main ingredients here are the technical assumptions \eqref{eq:T2} and \eqref{eq:T3}. We split the integral in  \eqref{eq:4} as \begin{align*}I(n,\varepsilon)&=I_1(n,\varepsilon)-I_2(n,\varepsilon)\\ &:=\int_{0}^{\infty}\int_{X_n} \pd_{t}^2 (b_{\kappa(n)})(x,t)\,d\mu(x)\,te^{-\varepsilon t}dt-\int_{0}^{\infty}\int_{X_n} \tL (b_{\kappa(n)})(x,t)\,d\mu(x)\,te^{-\varepsilon t}dt.
&\end{align*}

First we prove that 
\begin{equation}
\label{eq:12} \lim_{n\to \infty}I_2(n,\varepsilon)=0.
\end{equation} 
To see this we recall that $\tL=\sum_{i=1}^d\d_i^*\d_i$ with $\d_i$ given by \eqref{eq:didef} and $\d_i^*$ being the formal adjoint of $\d_i$ on $L^2.$ Then, 
$$I_2(n,\varepsilon)=\sum_{i=1}^d I_2^i(n,\varepsilon):=\sum_{i=1}^d\int_{0}^{\infty}\int_{X_n} (\d_i^*\d_i) (b_{\kappa(n)})(x,t)\,d\mu(x)\,te^{-\varepsilon t}dt,$$ and it is enough to prove that each of the integrals $I_2^i(n,\varepsilon)$ goes to $0$ as $n\to \infty.$ As the reasoning is symmetric in $i=1,\ldots,d,$ we present it only for $I_2^1(n,\varepsilon).$ Denote $$X^{(1)}=X_2\times\cdots \times X_d,\quad x^{(1)}=(x_2,\ldots,x_d),\quad \textrm{and }\mu^{(1)}=\mu_2\otimes\cdots \otimes \mu_d.$$  Formula \eqref{eq:dderadj} together with integration by parts in the $x_1$ variable produces 
$$I_2^1(n,\varepsilon)=\int_0^{\infty}\int_{X^{(1)}}\left((p_1^2w_1\partial_{x_1}b_{\kappa})(\Sigma_1^n,x^{(1)})-(p_1^2w_1\partial_{x_1}b_{\kappa})(\sigma_1^n,x^{(1)})\right)\,d\mu^{(1)}(x^{1})\,te^{-\varepsilon t}\,dt.$$ 

Call $z_1^n$ any of the quantities $\sigma_1^n$ or $\Sigma_1^n.$ Then the chain rule gives
\begin{equation}
\label{eq:14}
\begin{split}&(p_1^2w_1\partial_{x_1}b_{\kappa})(z_1^n,x^{(1)})\\
&=p_1^2(z_1^n) w_1(z_1^n) \pd_{x_1}P_tf(z_1^n,x^{(1)})\pd_{\zeta}B_{\kappa}\big(P_t f(z_1^n,x^{(1)}),Q_t {\bf g}(z_1^n,x^{(1)})\big)\\
&+p_1^2(z_1^n) w_1(z_1^n) \sk{\pd_{x_1}Q_t{\bf g}(z_1^n,x^{(1)})}{\nabla_{\eta}B_{\kappa}\big(P_t f(z_1^n,x^{(1)}),Q_t {\bf g}(z_1^n,x^{(1)})\big)}.\end{split}\end{equation}  
Since $f\in \mD$ and $g_i \in \mD_i$ we have that $P_t f\in \mD$ and $Q_t{\bf g}\in \mD_1\otimes\cdots \otimes \mD_d.$ Recall that $\fk$ is defined by \eqref{eq:fkdef} while  $\delta_i \fk,$ $i=1,\ldots,d,$ are given by \eqref{eq:dfkidef}. Now, Proposition \ref{pro:Bkprop} ii) implies
\begin{equation}
\label{eq:Pro5ii}
|\nabla_{\zeta,\eta}B_{\kappa}(\zeta,\eta)|\leq C_{p,q}(|\zeta|^{p-1}+|\eta|^{q-1}+|\eta|+\kappa^{q-1}).\end{equation}
Therefore, since $|\kappa(n)|\leq 1,$ a calculation based on \eqref{eq:14} together with the assumptions \eqref{eq:T2}, \eqref{eq:T3}, and H\"older's inequality produces $\lim_{n}I_2^1(n,\varepsilon)=0.$

Now we focus on $I_1(n,\varepsilon).$ Since $f\in \mD,$ $g_i\in \mD_i,$ $i=1,\ldots,d,$ $B_{\kappa}\in C^{\infty}(\R^{d+1})$ and we integrate over $x\in X_n,$ the double integral is absolutely convergent. Thus Fubini's theorem gives
\begin{equation*}
I_1(n,\varepsilon)=\int_{X_n} \int_{0}^{\infty}\pd_{t}^2 (b_{\kappa(n)})(x,t)\,te^{-\varepsilon t}dt\,d\mu(x).
\end{equation*}
Integrating by parts in the inner integral twice we obtain
\begin{align*}
&I_1(n,\varepsilon)=-\int_{X_n}\int_0^{\infty}\pd_{t} (b_{\kappa(n)})(x,t)(1-\varepsilon t)e^{-\varepsilon t}\,dt\,d\mu(x)=\int_{X_n}b_{\kappa(n)}(x,0)\,d\mu(x)\\
&+\varepsilon^2 \int_{X_n}\int_0^{\infty}b_{\kappa(n)}(x,t)\,t e^{-\varepsilon t}\,dt\,d\mu(x)-2\varepsilon \int_{X_n}\int_0^{\infty}b_{\kappa(n)}(x,t)e^{-\varepsilon t}\,dtd\mu(x)\\
&\leq \int_{X_n}b_{\kappa(n)}(x,0)\,d\mu(x)+\varepsilon^2 \int_{X_n}\int_0^{\infty}b_{\kappa(n)}(x,t)\,t e^{-\varepsilon t}\,dt\,d\mu(x)\\
&:=I_1^1(n)+I_1^2(n,\varepsilon).
\end{align*}
In the first two equalities above we neglected the boundary terms by using the chain rule together with \eqref{eq:Pro5ii}. 

First we treat $I_1^2(n,\varepsilon).$ Proposition \ref{pro:Bkprop} i) gives
$$I_1^2(n,\varepsilon)\leq \varepsilon^2 C_{p}\int_{X_n}\int_0^{\infty}\left(|P_t \Pi f(x)|^p+|Q_t{\bf g}(x)|^q+\max(\kappa(n)^p,\kappa(n)^q)\right)\,te^{-\varepsilon t}dt\,d\mu(x).$$
Take $\kappa(n)$ which satisfies \eqref{eq:k1} and
\begin{equation}
\label{eq:k2}
\max(\kappa(n)^{p-1/2},\kappa(n)^{q-1/2})\,\mu(X_n)\leq 1.
\end{equation}
Then, since $f\in \mD$ and $g_i\in \mD_i,$ $i=1,\ldots,d,$ we have
$$\limsup_{n\to\infty} I_1^2(n,\varepsilon)\leq \varepsilon^2 C_{p}\int_{X}\int_0^{\infty}|P_t \Pi f(x)|^p+|Q_t{\bf g}(x)|^q\,tdt\,d\mu(x)\leq C(p,f,g)\, \varepsilon^2,$$
and, consequently,
\begin{equation}
\label{eq:-14}
\limsup_{\varepsilon\to 0^+}\limsup_{n\to\infty} I_1^2(n,\varepsilon)=0.
\end{equation}

Coming back to $I_1^1(n)$ we use Proposition \ref{pro:Bkprop} i) to estimate
$$I_1^1(n)\leq \frac{1+\gamma}{2}\int_{X_n}(|\Pi f(x)|+\kappa(n))^p\,d\mu(x)+\frac{1+\gamma}{2}\int_{X_n}(|{\bf g}(x)|+\kappa(n))^q\, d\mu(x).$$
Now for each $\varepsilon>0$ we split the first integral onto $\int_{|\kappa(n)|\leq \varepsilon |\Pi f(x)|} $ and $\int_{|\kappa(n)|> \varepsilon |\Pi f(x)|}$ and the second integral onto $\int_{|\kappa(n)|\leq \varepsilon |{\bf g}(x)|} $ and $\int_{|\kappa(n)|> \varepsilon |{\bf g}(x)|}.$ Then we obtain
\begin{align*}
I_1^1(n)\leq& \frac{1+\gamma}{2}\left((1+\varepsilon)^p\|\Pi f\|_p^p+(1+\varepsilon)^q\|{\bf g}\|_q^q\right)\\&+\frac{1+\gamma}{2}\left((1+\varepsilon^{-1})^{p}\kappa(n)^p\mu(X_n)+(1+\varepsilon^{-1})^{q}\kappa(n)^q\mu(X_n)\right).
\end{align*}   
Since $\kappa(n)$ satisfies \eqref{eq:k1} and \eqref{eq:k2} we arrive at
$$\limsup_{\varepsilon\to 0^+}\limsup_{n\to\infty} I_1^1(n)\leq \frac{1+\gamma}{2}\left(\|\Pi f\|_p^p+\|{\bf g}\|_q^q\right).$$
Recalling \eqref{eq:12} and \eqref{eq:-14} we thus proved
\begin{equation}
\label{eq:15}
\limsup_{\varepsilon\to 0^+}\limsup_{n\to\infty} I(n,\varepsilon)\leq \frac{1+\gamma(p)}{2}\left(\|\Pi f\|_p^p+\|{\bf g}\|_q^q\right), \end{equation}
which is the upper estimate of \eqref{eq:4} we need.

\bigskip
{\bf Completion of the proof of the bilinear embedding}. Consider first $p\ge 2.$ Combining the lower estimate \eqref{eq:10} and the upper estimate \eqref{eq:15} we obtain 
\begin{equation}
\label{eq:17}
\int_0^\infty\int_{X}| F(x,t)|_*\,| G(x,t)|_*\,d\mu(x)\,t\,dt\,\leq \frac{1+\gamma(p)}{2\gamma(p)}\bigg(\|\Pi f\|_p^p+\|{\bf g}\|_q^q\bigg).
\end{equation}
Finally, a polarization arguments finishes the proof. More precisely, for $s>0$ we replace $f$ with $s f$ and ${\bf g}$ with $s^{-1}{\bf g}$ on both sides of \eqref{eq:17}. Then, the left hand side is unchanged, while minimizing the right hand side over $s>0$ we obtain
\begin{equation}
\label{eq:ske0}
\int_0^\infty\int_{X}| F(x,t)|_*\,| G(x,t)|_*\,d\mu(x)\,t\,dt\,\leq \frac{1+\gamma(p)}{2\gamma(p)}\left(\bigg(\frac pq\bigg)^{1/p}+\bigg(\frac qp\bigg)^{1/q}\right)\|\Pi f\|_p\|{\bf g}\|_q.
\end{equation} 
Using the above inequality, a calculation leads to \eqref{eq:18}. We sketch the argument below.

Note that for $p\ge 2$ we have $p^*=p$ and recall that $\gamma(p)=q(q-1)/8.$ Thus, for  $1<q\le 2$  we obtain 
\begin{equation}
\label{eq:ske1}
\begin{split}
\frac{1+\gamma(p)}{2\gamma(p)}\left(\bigg(\frac pq\bigg)^{1/p}+\bigg(\frac qp\bigg)^{1/q}\right)&=\frac{8+q(q-1)}{2}(q-1)^{1/q-1}(p-1)\\
&\le (q+3) (q-1)^{1/q-1}(p^*-1).
\end{split}\end{equation}
Denoting $s=q-1$ we need to maximize the function $H(s):=(s+4)s^{-s/(s+1)}$ for $s\in(0,1].$ Let 
$$h(s)=\log(s+4)-\frac{s \log s}{s+1},$$
so that $H(s)=e^{h(s)}.$ 
Then we have
$$h'(s)=\frac{1}{s+4}-\frac{\log s }{(s+1)^2}-\frac{1}{s+1}\quad \textrm{and}\quad h''(s)=-\frac1{(s+4)^2}+\frac{2\log s }{(s+1)^3}+\frac{s-1}{s(s+1)^2};$$
consequently, $h''(s)<0$ for $s\in(0,1).$  Observe that $h'(7/20)>0$ and $h'(2/5)<0.$ Therefore $h'$ has a unique zero inside the interval $(7/20,2/5)$ and $h$ attains a global maximum there. Obviously, the same is true for $H=e^h.$ Now it is easy to see that
 $$\max_{7/20\le s\le 2/5}H(s)<\frac{22}{5}\cdot(7/20)^{-2/7}
<6,$$ and thus also $\sup_{0<s\le 1}H(s)<6.$ 
 Hence, coming back to \eqref{eq:ske1} we obtain
 $$\frac{1+\gamma(p)}{2\gamma(p)}\left(\bigg(\frac pq\bigg)^{1/p}+\bigg(\frac qp\bigg)^{1/q}\right) \le 6\, (p^*-1).$$
 In view of \eqref{eq:ske0} this implies \eqref{eq:18} and completes the proof of Theorem \ref{thm:bilem} for $p\ge 2$.

The proof of Theorem \ref{thm:bilem} for $p\le 2$ proceeds analogously once we switch $p$ with $q$ and $P_t f$ with $Q_t{\bf g}$ in the definition of $b_{\kappa}.$ Namely, we consider $\tilde{b}_\kappa(x,t)=\tilde{B}_{\kappa}(Q_t{\bf g},P_t f)$ where $\tilde{B}_{\kappa}(\zeta,\eta)=B_{\kappa,q,(d,1)}(\zeta,\eta),$ $\zeta\in \R^d,$ $\eta \in \R.$ Here $B_{\kappa,q,(d,1)}$ is the function from Proposition \ref{pro:Bkprop} with $m_1=d$ and $m_2=1.$ Then we repeat the argument used for $p\ge 2.$ The function $\tilde{B}_{\kappa}$ satisfies items iii)-v) of Proposition \ref{pro:Bkprop} with $p$ replaced by $q$.  Therefore both the lower estimate \eqref{eq:10} and the upper estimate \eqref{eq:15} hold with $\gamma(p)$ replaced by $\gamma(q).$

\section{Examples}
\label{sec:Exa}

Throughout this section we apply Theorem \ref{thm:main} to the examples of orthogonal systems considered by Nowak and Stempak in \cite[Section 7]{NS2}. This is possible for all of these systems except for the Fourier-Bessel expansions \cite[Section 7.8]{NS2}. In this case the condition \eqref{eq:T2} fails. Despite this failure we think that it might be possible to treat also the Fourier-Bessel expansions by the methods of the present paper. It might be also interesting to try to apply the methods of our paper to the Riesz transforms  considered by Nowak and Sj\"ogren in \cite{NSj3} (in the case of Jacobi trigonometric polynomial expansions).

In all of the examples we present, for more details the reader is kindly referred to \cite[Sections 7.1-7.7]{NS2}. The formulas for $v_i$ and $r=\sum_{i=1}^d r_i$ in the examples below follow directly from \eqref{eq:com1} and \eqref{eq:rform1}. Recall that 
$$\mu=\mu_1\otimes\cdots \otimes \mu_d,\qquad X=X_1\times\cdots\times X_d,\qquad L^p=L^p(X,\mu),\qquad \|\cdot\|_p=\|\cdot\|_{L^p},$$
and
$$p^*=\max\bigg(p,\frac{p}{p-1}\bigg).$$
\subsection{Ornstein-Uhlenbeck Operator -  Hermite polynomial expansions}

\label{OUop}

Here we consider   $$p_i=1,\quad q_i=0 ,\quad a_i=0,\quad  w_i(x_i)=\pi^{-1/2}e^{-x_i^2},\quad d\mu_i(x_i)=w_i(x_i)\,dx_i,$$
on $X_i=\R.$ 
Then \begin{equation}
\label{eq:quanOU}
\delta_i=\d_i=\partial_{x_i},\quad\delta_i^*=-\partial_{x_i}+2x_i,\qquad v_i=[\delta_i,\delta_i^*]=2,\qquad r=0,\end{equation}
and
$$L=\sum_{i=1}^dL_i=-\Delta+2\sk{x}{\nabla}$$
is the Ornstein-Uhlenbeck operator on $X=\R^d.$ The operator $L$ is essentially self-adjoint on $C_c^{\infty}(\R^d)$ with the self-adjoint extension given by
$$Lf=\sum_{k\in \N^d}|k|\sk{f}{\tilde{H}_{k}}_{L^2}\tilde{H}_{k}.$$
In the formula above $|k|=k_1+\cdots +k_d,$ the symbol $L^2$ stands for $L^2=L^2(\R^d,\mu),$ while $\{\tilde{H}_k\}_{k\in \N^d}$ is the system of $L^2$ normalized Hermite polynomials, see \cite[Section 7.1]{NS2} and \cite[p. 60]{Leb}. In this section we take $$\fk=\tilde{H}_k,\qquad k\in \N^d.$$ 

Note that $\mu$ is a probability measure in this setting. The projection $\Pi$ becomes
$$\Pi f=\sum_{k\in \N^d,k\neq 0}\sk{f}{\tilde{H}_{k}}_{L^2}\tilde{H}_{k},\qquad f\in L^2.$$
Then $$(I-\Pi)f=\sk{f}{\tilde{H}_{0}}_{L^2}\tilde{H}_{0},$$ 
and, since $\tilde{H}_0=1$, the operator $I-\Pi$ is the projection onto the constants given by
$$(I-\Pi)f(x)=\int_{X}f(y)\,d\mu(y),\qquad x\in X.$$
Hence, by Holder's inequality $\|(I-\Pi)f\|_p\leq \|f\|_p,$ and, consequently,
\begin{equation}
\label{eq:PiOU}
\|\Pi f\|_p \leq 2\|f\|_p,\qquad 1\leq p\leq \infty.
\end{equation}   

Next
\begin{equation}
\label{eq:derOU}
\delta_i \tilde{H}_k=\sqrt{2k_j}\tilde{H}_{k-e_j},\end{equation}
where, by convention $\tilde{H}_{k-e_j}=0$ if $k_j=0.$ This convention is also used for the examples presented in the next sections. The Riesz transform is defined by
$$R_i f=\sum_{k\in \N^d,k\neq 0}\bigg(\frac{k_j}{|k|}\bigg)^{1/2}\sk{f}{\tilde{H}_{k}}_{L^2}\tilde{H}_{k-e_i},\qquad f\in L^2.$$  

Dimension-free estimates for the vector ${\bf R}f=(R_1f,\ldots,R_d f)$ were proved by Meyer \cite{Mey1} (see also \cite{Gun1}, \cite{Gut1}, and \cite{Pis1} for different proofs). Later Dragi\v{c}evi\'c and Volberg \cite[Corollary 0.4]{DV} found a proof which uses the Bellman function method. The best result in terms of the size of the constants is due to Arcozzi \cite[Corollary 2.4]{Arc1} who proved that $\|{\bf R} f\|_p\leq 2(p^*-1)\|f\|_p,$ $1<p<\infty.$ An application of Theorem \ref{thm:main} produces similar, though weaker, bounds.
\begin{thm}
	\label{thm:OU}
Fix $1<p<\infty.$ Then, for $f\in L^p$ such that $\int_{X}f(y)\,d\mu(y)=0,$ we have 
	\begin{equation}
	\label{eq:ROU}
	\left\|{\bf R} f\right\|_p\leq 24(p^*-1)\|f\|_{p}.
	\end{equation}
\end{thm}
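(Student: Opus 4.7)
The plan is to verify that the Ornstein-Uhlenbeck setting fits the hypotheses of Theorem \ref{thm:main} with the favourable constants $K=0$ and $\Pi f = f$, and then invoke it directly. Everything is already essentially assembled in the preceding paragraphs; the work is just to check assumptions.

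First, I would check the two crucial assumptions. From \eqref{eq:quanOU} we read off $v_i = 2$, so \eqref{eq:A1} is immediate. Since $q_i \equiv 0$ for every $i$, the sum $\sum_{i=1}^d q_i^2$ vanishes identically, so \eqref{eq:A2} holds with $K=0$. Next I would verify the technical conditions \eqref{eq:T1}, \eqref{eq:T2}, \eqref{eq:T3}. Identity \eqref{eq:T1} reduces to integration by parts of $\partial_{x_i}\tilde H_{k_i}^i$ against $\tilde H_{m_i}^i$ on $\R$, with no boundary contribution because the Gaussian weight $w_i(x_i) = \pi^{-1/2} e^{-x_i^2}$ kills the polynomial factors at $\pm\infty$. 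The same Gaussian decay dominating polynomial growth of $\tilde H_{k_i}^i$ and of $\partial_{x_i} \tilde H_{k_i}^i$ also gives the boundary vanishing \eqref{eq:T2}, since $p_i \equiv 1$. For \eqref{eq:T3}, density of polynomials in $L^p(\R^d,\mu)$ for every $1\le p<\infty$ follows from Lemma \ref{lem:BC1}, as $\mu$ has finite exponential moments; both $\mD = \operatorname{lin}\{\tilde H_k\}$ and each $\mD_i = \delta_i[\mD]$ then coincide with the space of polynomials by \eqref{eq:derOU}.

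Second, I would exploit the mean-zero hypothesis on $f$. Because $\tilde H_0 \equiv 1$, the assumption $\int_X f\,d\mu = 0$ says exactly that $\sk{f}{\tilde H_0}_{L^2} = 0$, which identifies $f$ with an element of the orthogonal complement of $\tilde H_{(0,\dots,0)}$. Thus $\Pi f = f$ in $L^2$, and by the description of $I-\Pi$ given in the paragraph around \eqref{eq:PiOU} this identity extends to all $f\in L^p$ with vanishing $\mu$-integral. Applying Theorem \ref{thm:main} with $K=0$ yields
\[
\|{\bf R}f\|_p \le 24(1+\sqrt{0})(p^*-1)\|\Pi f\|_p = 24(p^*-1)\|f\|_p,
\]
which is \eqref{eq:ROU}.

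There is essentially no technical obstacle; the only point that requires any thought is checking \eqref{eq:T2} with the factor $1 + |\fki|^{s_1} + |\delta_i \fki|^{s_2}$ for every $s_1, s_2 > 0$, but since Hermite polynomials and their derivatives grow only polynomially while $w_i$ decays like $e^{-x_i^2}$, any polynomial power is absorbed. Everything else is a direct substitution into the main theorem.
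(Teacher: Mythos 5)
Your proposal matches the paper's proof: the argument is exactly to verify \eqref{eq:A1} and \eqref{eq:A2} (with $K=0$) from \eqref{eq:quanOU}, check the technical assumptions \eqref{eq:T1}--\eqref{eq:T3} via integration by parts, Gaussian decay, and Lemma \ref{lem:BC1} with \eqref{eq:derOU}, observe that mean-zero implies $\Pi f = f$, and then apply Theorem \ref{thm:main}. You supply somewhat more detail in verifying the technical hypotheses, but the route is identical.
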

\begin{remark}
	Using \eqref{eq:PiOU} we may extend the bound \eqref{eq:ROU} to all $f\in L^p$ with $24$ being replaced by $48.$
\end{remark}
\begin{proof}
We apply Theorem \ref{thm:main}. In order to do so we need to check that its assumptions are satisfied.

By \eqref{eq:quanOU} we see that \eqref{eq:A1} and \eqref{eq:A2} (with $K=0$) hold. Condition \eqref{eq:T1} is proved by an easy calculation based on integration by parts. The assumption \eqref{eq:T2} is also straightforward. Finally, \eqref{eq:T3} follows from Lemma \ref{lem:BC1} and \eqref{eq:derOU}.

Now, if $\int_{X}f(y)\,d\mu(y)=0$ then $\Pi f=f.$  Thus, an application of Theorem \ref{thm:main} completes the proof.
\end{proof}

\subsection{Laguerre operator -  Laguerre polynomial expansions}
\label{LagPol}

Here, for a parameter $\alpha\in (-1,\infty)^d,$  we consider   $$p_i=\sqrt{x_i},\quad q_i=0 ,\quad a_i=0,\quad   w_i(x_i)=\frac1{\Gamma(\alpha_i+1)}\,x_i^{\alpha_i}e^{-x_i}dx_i,\quad d\mu_i(x_i)=w_i(x_i)\,dx_i,$$
on $X_i=(0,\infty).$ 
Then $\delta_i=\d_i=\sqrt{x_i}\,\partial_{x_i},$ and thus \begin{equation}
\label{eq:quanLaPol}
\delta_i^*=-\sqrt{x_i}\,\partial_{x_i}-\frac{\alpha_i+1/2}{\sqrt{x_i}}+\sqrt{x_i},\quad v_i=[\delta_i,\delta_i^*]=\frac{\alpha_i+1/2+x_i}{2x_i},\quad r=0.\end{equation}
In this case 
$$L=\sum_{i=1}^dL_i=\sum_{i=1}^{d} -x_i\partial_{x_i}^2-(\alpha_i+1-x_i)\partial_{x_i}$$
is the Laguerre operator on $X=(0,\infty)^d.$ It is  symmetric on $C_c^{\infty}((0,\infty)^d)$ and has a self-adjoint extension 
\begin{equation*}
Lf=\sum_{k\in \N^d}|k|\sk{f}{\tilde{L}_{k}^{\alpha}}_{L^2}\tilde{L}_{k}^{\alpha};\end{equation*}
here $L^2=L^2((0,\infty)^d,\mu),$ while $\{\tilde{L}_k^{\alpha}\}_{k\in \N^d}$ is the system of $L^2$ normalized Laguerre polynomials, see \cite[Section 7.2]{NS2} and \cite[p. 76]{Leb}. These Laguerre polynomials are our functions $\fk$ in this section, namely
$$\fk=\tilde{L}_{k}^{\alpha},\qquad k\in \N^d.$$ 


Next we have
\begin{equation}
\label{eq:diLaPol}
\delta_i\tilde{L}_{k}^{\alpha}=\sqrt{k_j}\sqrt{x_i}\tilde{L}_{k-e_i}^{\alpha+e_i},\end{equation}
while the projection $\Pi$ becomes
$$\Pi f=\sum_{k\in \N^d,k\neq 0}\sk{f}{\tilde{L}_{k}^{\alpha}}_{L^2}\tilde{L}_{k}^{\alpha},\qquad f\in L^2.$$
A repetition of the argument from the previous section shows that $\Pi f=f$ if and only if $\int_{X}f(y)\,d\mu(y)=0$ and 
\begin{equation}
\label{eq:PiLaPol}
\|\Pi f\|_p \leq 2\|f\|_p,\qquad 1\leq p\leq \infty.
\end{equation}

The Riesz transform is then given by
$$R_i f=\sum_{k\in \N^d,k\neq 0}\bigg(\frac{k_j}{|k|}\bigg)^{1/2}\sk{f}{\tilde{L}_{k}^{\alpha}}_{L^2}\sqrt{x_i}\tilde{L}_{k-e_i}^{\alpha+e_i},\qquad f\in L^2.$$  
Dimension-free bounds for single Riesz transforms $R_i$ were first studied by Guti\'errez, Incognito and Torrea \cite{GIT} (half-integer multi-indices), and generalized\footnote[2]{In \cite[Theorem 13]{No1} the author also states an estimate on $L^p$ for the vector of Riesz-Laguerre
	transforms that is dimension-free for certain values of $\alpha$. Unfortunately this result is not properly proved there \cite{Npriv}. This is due to a problem in the proof of the vectorial $g$-function bound from 
	\cite[Theorem 7(b)]{No1}.} by Nowak \cite{No1} (to multi-indices $\alpha\in[-1/2,\infty)^d$). Moreover in \cite{GLLNU}, Graczyk, Loeb, L\'opez, Nowak, and Urbina proved dimension-free estimates on $L^p$ for the vector of Riesz-Laguerre transforms and half-integer multi-indices $\alpha$. Recently, the author \cite[Theorem 4.1 b)]{BWr1} obtained dimension-free bounds on $L^p$ for scalar Riesz transforms and general parameters $\alpha\in(-1,\infty)^d,$ while Mauceri and Spinelli \cite[Theorem 5.2]{MSJFA} proved a dimension-free bound for the vectorial Riesz transforms ${\bf R} f=(R_1f,\ldots,R_df)$ (and $\alpha\in[-1/2,\infty)^d$). All the bounds mentioned in this paragraph are also independent of the parameter $\alpha$ (appropriately restricted). Moreover, the estimate from \cite[Theorem 5.2]{MSJFA} is also linear in $p^*$.

By using Theorem \ref{thm:main} we obtain a result which coincides with \cite[Theorem 5.2]{MSJFA} in the case of Riesz transforms acting on functions.
\begin{thm}
	\label{thm:LaPol}
	Fix $\alpha\in [-1/2,\infty)^d$ and $1<p<\infty.$ Then, for $f\in L^p$ which satisfy $\int_{X}f(y)\,d\mu(y)=0,$  we have 
	\begin{equation*}
	\left\|{\bf R} f\right\|_p\leq 24(p^*-1)\|f\|_{p}.
	\end{equation*}
\end{thm}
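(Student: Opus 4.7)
The plan is to mimic the proof of Theorem \ref{thm:OU} from the previous subsection and reduce the statement to an application of Theorem \ref{thm:main}. Since $q_i\equiv 0$ in this setting, assumption \eqref{eq:A2} holds with $K=0$, so Theorem \ref{thm:main} will yield the constant $24(1+\sqrt{0})(p^*-1)=24(p^*-1)$, matching the claim. Thus the main task is verifying the five hypotheses \eqref{eq:A1}, \eqref{eq:A2}, \eqref{eq:T1}, \eqref{eq:T2}, \eqref{eq:T3}, and identifying $\Pi f$ with $f$ under the mean-zero assumption.

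First I would read off from \eqref{eq:quanLaPol} that $v_i(x_i)=(\alpha_i+1/2+x_i)/(2x_i)$, and note that the \emph{precise reason} for the hypothesis $\alpha\in[-1/2,\infty)^d$ is to force $v_i\ge 0$, i.e.\ \eqref{eq:A1}. With $q_i\equiv 0$, assumption \eqref{eq:A2} is automatic ($K=0$). For \eqref{eq:T1} I would perform one integration by parts on $(0,\infty)$ against $w_i$; the boundary terms vanish because $p_i^2w_i=x_i^{\alpha_i+1}e^{-x_i}/\Gamma(\alpha_i+1)$ decays exponentially at $\infty$ and vanishes polynomially at $0$ whenever $\alpha_i>-1$, and $\tilde L^\alpha_{k_i}$ together with its derivatives is a polynomial.

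For \eqref{eq:T2}, both boundary conditions have to be checked at $0$ and $\infty$ for $z_i\in\{0,\infty\}$. At $\infty$ the exponential factor in $w_i$ dominates any polynomial in $x_i$, so both limits are trivially $0$. At $0$, using \eqref{eq:diLaPol}, the polynomial $\tilde L^\alpha_{k_i}$ is bounded while $\delta_i\tilde L^\alpha_{k_i}=\sqrt{k_i}\sqrt{x_i}\,\tilde L^{\alpha+1}_{k_i-1}$ is also bounded near $0$; multiplying by $p_i^2w_i\partial_{x_i}\tilde L^\alpha_{k_i}=O(x_i^{\alpha_i+1})$ and by $p_i^2w_i\partial_{x_i}\delta_i\tilde L^\alpha_{k_i}=O(x_i^{\alpha_i+1/2})$ both expressions tend to $0$ since $\alpha_i\ge -1/2$. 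Hypothesis \eqref{eq:T3} follows from Lemma \ref{lem:BC1} applied to $\mu_i$ (the Laguerre weight has exponential decay at $\infty$, so the integrability condition in Lemma \ref{lem:BC1} holds); density of $\mD_i$ is then obtained by combining Lemma \ref{lem:BC1} with the fact that $\delta_i\tilde L^\alpha_k=\sqrt{k_i}\sqrt{x_i}\tilde L^{\alpha+e_i}_{k-e_i}$ spans, after multiplication by $\sqrt{x_i}$, the full polynomial algebra modulo the first factor.

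Finally, to handle the projection $\Pi$, note that $\tilde L^\alpha_0$ is a constant function and $\mu$ is a probability measure, so exactly as in the Ornstein--Uhlenbeck argument, $(I-\Pi)f$ is the mean of $f$ over $(X,\mu)$; hence the hypothesis $\int_X f\,d\mu=0$ gives $\Pi f=f$ and $\|\Pi f\|_p=\|f\|_p$. Plugging $K=0$ into Theorem \ref{thm:main} concludes the argument. The only step with any subtlety is the boundary check in \eqref{eq:T2} at the endpoint $x_i=0$ when $\alpha_i=-1/2$, where the exponent $\alpha_i+1/2$ collapses to $0$; here one must use the explicit factor $\sqrt{x_i}$ coming from $\delta_i\tilde L^\alpha_{k_i}$ to regain a vanishing power, so I would expect this calculation to be the only nontrivial verification.
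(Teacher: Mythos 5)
Your proposal follows the paper's proof exactly: verify the hypotheses \eqref{eq:A1}--\eqref{eq:A2} and \eqref{eq:T1}--\eqref{eq:T3}, apply Theorem \ref{thm:main} with $K=0$, and use the mean-zero hypothesis to identify $\Pi f$ with $f$. The paper's own write-up is essentially identical, simply asserting that \eqref{eq:T1} and \eqref{eq:T2} ``follow from a direct calculation'' and that \eqref{eq:T3} follows from Lemma \ref{lem:BC1} together with \eqref{eq:diLaPol}.

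Your more explicit treatment of \eqref{eq:T2}, however, exposes a genuine gap at the endpoint $\alpha_i=-1/2$, and your proposed fix does not work. You correctly observe $p_i^2w_i\,\partial_{x_i}\delta_i\tilde L^{\alpha_i}_{k_i}=O(x_i^{\alpha_i+1/2})$ near $x_i=0$, which at $\alpha_i=-1/2$ is only $O(1)$; in fact the limit is nonzero, since $p_i^2w_i\,\partial_{x_i}\delta_i\tilde L^{-1/2}_{k_i}(x_i)\to \tfrac{1}{2\sqrt\pi}\,\partial_{x_i}\tilde L^{-1/2}_{k_i}(0)\neq 0$ for $k_i\geq 1.$ The ``explicit factor $\sqrt{x_i}$ coming from $\delta_i\tilde L^\alpha_{k_i}$'' cannot rescue this: that factor lives only inside the term $|\delta_i\fki|^{s_2}$ of the prefactor $1+|\fki|^{s_1}+|\delta_i\fki|^{s_2}$, which tends to the nonzero constant $1+|\tilde L^{-1/2}_{k_i}(0)|^{s_1}$ as $x_i\to 0$ and therefore supplies no vanishing power. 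So \eqref{eq:T2} actually fails at $\alpha_i=-1/2$. The correct way to include the endpoint --- and the one the paper itself uses in the analogous situations of Sections \ref{LagConv} and \ref{JacFun} --- is to establish the bound first for $\alpha\in(-1/2,\infty)^d$, where your verification of \eqref{eq:T2} is sound, and then pass to $\alpha_i=-1/2$ by a continuity argument in $\alpha$ (the Laguerre polynomials, the eigenvalues, and the measure all depend continuously on $\alpha$). Note the paper does not flag this in Section \ref{LagPol}, so you should regard the continuity argument as an implicit step that must be supplied; simply claiming $O(x_i^{\alpha_i+1/2})\to 0$ for $\alpha_i\ge -1/2$ is false at the boundary.
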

\begin{remark}
	By \eqref{eq:PiLaPol} we have the same bound for general $f\in L^p$ with the constant being twice as large.
\end{remark}
\begin{proof}
We are going to apply Theorem \ref{thm:main}, so we need to verify its assumptions.
	
	By \eqref{eq:quanLaPol} we see that if $\alpha\in [-1/2,\infty)^d,$ then  \eqref{eq:A1} and \eqref{eq:A2} (with $K=0$) are satisfied. Moreover, the assumptions \eqref{eq:T1} and \eqref{eq:T2} follow from a direct calculation. Next, for such $\alpha$ the condition \eqref{eq:T3} can be deduced from Lemma \ref{lem:BC1} together with \eqref{eq:diLaPol}. 
	
 Now, if $\int_{X}f(y)\,d\mu(y)=0$ then $\Pi f=f.$ Thus, using Theorem \ref{thm:main} we complete the proof of Theorem \ref{thm:LaPol}.
\end{proof}

\subsection{Jacobi operator -  Jacobi polynomial expansions}
\label{sec:Jac}

In this section for parameters $\alpha,\beta\in (-1,\infty)^d$  we consider   \begin{align*}p_i&=\sqrt{1-x_i^2},\quad q_i=0 ,\quad a_i=0,\\
 w_i(x_i)&=\frac1{C(\alpha_i,\beta_i)}\,(1-x_i)^{\alpha_i}(1+x_i)^{\beta_i}dx_i,\quad  d\mu_i(x_i)=w_i(x_i)dx_i,\quad X_i=(-1,1),\end{align*}
 where $C(\alpha_i,\beta_i)$ is such that $\mu_i(X_i)=1.$  
Then $\delta_i=\d_i=\sqrt{1-x_i^2}\,\partial_{x_i},$ and \begin{equation}
\label{eq:quanJaPol}
\begin{split}
\delta_i^*&=-\sqrt{1-x_i^2}\,\partial_{x_i}+(\alpha_i+1/2)\sqrt\frac{1+x_i}{1-x_i}-(\beta_i+1/2)\sqrt\frac{1-x_i}{1+x_i},\\
 v_i&=[\delta_i,\delta_i^*]=\frac{\alpha_i+1/2}{1-x_i}+\frac{\beta_i+1/2}{1+x_i},\qquad r=0.
\end{split}\end{equation}
Here
$$L=\sum_{i=1}^dL_i=\sum_{i=1}^{d} -(1-x_i^2)\partial_{x_i}^2-(\beta_i-\alpha_i-(\alpha_i+\beta_i+2)x_i)\partial_{x_i}$$
is the Jacobi operator on $X=(-1,1)^d.$   
Let $L^2=L^2((-1,1)^d,\mu),$ and denote by $\{\tilde{P}_k^{\alpha,\beta}\}_{k\in \N^d}$ the system of $L^2$ normalized Jacobi polynomials, see \cite[Section 7.1]{NS2} and \cite[Chapter 4]{Szeg1}. These Jacobi polynomials are our functions $\fk$ in this section, namely
$$\fk=\tilde{P}_{k}^{\alpha,\beta},\qquad k\in \N.$$ 
The Jacobi operator is symmetric on  $C_c^{\infty}((-1,1)^d)$ and has a self-adjoint extension 
$$Lf=\sum_{k\in \N^d}\la_k\sk{f}{\tilde{P}_{k}^{\alpha,\beta}}_{L^2}\tilde{P}_{k}^{\alpha,\beta},$$
where $\la_k=\sum_{i=1}^d\la_{k_i}^i$ with $\la_{k_i}^i=k_i(k_i+\alpha_i+\beta_i+1),$ $i=1,\ldots,d.$ Similarly to the previous two sections the projection $\Pi$ is
$$\Pi f=\sum_{k\in \N^d,k\neq 0}\sk{f}{\tilde{P}_{k}^{\alpha,\beta}}_{L^2}\tilde{P}_{k}^{\alpha,\beta},\qquad f\in L^2.$$
Moreover, $\Pi f=f$ precisely when $\int_{X}f(y)\,d\mu(y)=0$ and we have
\begin{equation}
\label{eq:PiJaPol}
\|\Pi f\|_p \leq 2\|f\|_p,\qquad 1\leq p\leq \infty.
\end{equation}   

The action of $\delta_i$ on Jacobi polynomials is given by 
\begin{equation} \label{eq:derJacPol}\delta_i\tilde{P}_{k}^{\alpha,\beta}=\sqrt{k_i(k_i+\alpha_i+\beta_i+1)}\sqrt{1-x_i^2}\tilde{P}_{k-e_i}^{\alpha+e_i,\beta+e_i},\end{equation}
and the Riesz transform becomes
$$R_i f=\sum_{k\in \N^d,k\neq 0}\bigg(\frac{\la_{k_i}^i}{\la_k}\bigg)^{1/2}\sk{f}{\tilde{P}_{k}^{\alpha,\beta}}_{L^2}\sqrt{1-x_i^2}\,\tilde{P}_{k-e_i}^{\alpha+e_i,\beta+e_i},\qquad f\in L^2.$$  
Dimension and parameter free estimates for single Riesz transforms $R_i$ are due to Nowak and Sj\"ogren \cite{NSj2}, who proved them for $\alpha,\beta\in[-1/2,\infty)^d.$

An application of Theorem \ref{thm:main} generalizes \cite[Theorem 5.1]{NSj2} to the vectorial Riesz transforms 
${\bf R}f=(R_1 f,\ldots, R_d f).$ This result is new according to our knowledge.
Moreover, we obtain an explicit estimate which is linear in $p^*.$
\begin{thm}
	\label{thm:JaPol}
	Fix $\alpha,\beta\in [-1/2,\infty)^d$ and $1<p<\infty.$ Then, for $f\in L^p$ which satisfy $\int_{X}f(y)\,d\mu(y)=0,$  we have 
	\begin{equation}
	\label{eq:JaPol}
	\left\|{\bf R} f\right\|_p\leq 24\,(p^*-1)\|f\|_{p},\qquad f\in L^p.
	\end{equation}
\end{thm}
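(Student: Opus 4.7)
The plan is to apply Theorem \ref{thm:main}, following the same strategy used for the Ornstein--Uhlenbeck and Laguerre settings in Theorems \ref{thm:OU} and \ref{thm:LaPol}. Since $q_i\equiv 0$ throughout this section, formula \eqref{eq:rform1} gives $r_i=a_i=0$, so $r\equiv 0$ and assumption \eqref{eq:A2} holds trivially with $K=0$; this accounts for the constant $24(1+\sqrt{0})=24$ in \eqref{eq:JaPol}. The explicit expression for $v_i$ in \eqref{eq:quanJaPol} makes it immediate that the hypothesis $\alpha,\beta\in[-1/2,\infty)^d$ is precisely what is needed to ensure $v_i\geq 0$ on $(-1,1)^d$, so \eqref{eq:A1} is satisfied.

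For the remaining technical assumptions, the essential observation is that $p_i^2 w_i$ is a constant multiple of $(1-x_i)^{\alpha_i+1}(1+x_i)^{\beta_i+1}$, whose exponents are both at least $1/2$ under our parameter hypothesis; in particular $p_i^2 w_i$ vanishes at the endpoints $\pm 1$. Combined with the fact that $\tilde{P}_k^{\alpha,\beta}$ is a polynomial and, by \eqref{eq:derJacPol}, $\delta_i \tilde{P}_k^{\alpha,\beta}$ equals $\sqrt{1-x_i^2}$ times a polynomial---so that both $\fki$ and $\delta_i\fki$ are bounded on $[-1,1]$---a direct calculation yields both \eqref{eq:T1} and \eqref{eq:T2}. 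For \eqref{eq:T1} one integrates by parts, and the endpoint term is of the form $p_i^2 w_i\,\partial_{x_i}\fki\cdot\f^i_{m_i}$ evaluated at $x_i=\pm 1$, which vanishes.

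The density condition \eqref{eq:T3} is the only point requiring a bit of care. Since $X=(-1,1)^d$ is bounded, the exponential integrability assumption of Lemma \ref{lem:BC1} holds trivially and yields density of polynomials, hence of $\mD$, in $L^p(X,\mu)$ for $1\leq p<\infty$. For $\mD_i$, formula \eqref{eq:derJacPol} identifies $\mD_i$ with $\sqrt{1-x_i^2}$ times the space of all polynomials on $X$ (using that $\{\tilde{P}_{k_i-1}^{\alpha_i+1,\beta_i+1}\}_{k_i\geq 1}$ spans the polynomials in $x_i$, tensored with the unshifted Jacobi bases in the remaining variables). A short Hahn--Banach argument based on the boundedness of $\sqrt{1-x_i^2}$ on $X$ and its strict positivity in the interior then translates density of polynomials in $L^p$ into density of $\mD_i$ in $L^p$. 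Finally, since the kernel of $L$ on $L^2$ is spanned by the constant $\tilde{P}_0^{\alpha,\beta}=1$, the assumption $\int_X f\,d\mu=0$ is equivalent to $\Pi f=f$, and Theorem \ref{thm:main} applied with $K=0$ gives \eqref{eq:JaPol}. The main obstacle is the density of $\mD_i$; the remaining verifications amount to substitution into the explicit formulas \eqref{eq:quanJaPol} and \eqref{eq:derJacPol}.
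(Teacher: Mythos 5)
Your proof follows the paper's own argument essentially verbatim: reduce to Theorem~\ref{thm:main} by checking \eqref{eq:A1}, \eqref{eq:A2} with $K=0$, \eqref{eq:T1}--\eqref{eq:T3}, and use that $\int_X f\,d\mu=0$ forces $\Pi f=f$; your Hahn--Banach/multiplication-by-$\sqrt{1-x_i^2}$ argument for the density of $\mD_i$ is a useful expansion of the paper's one-line citation of Lemma~\ref{lem:BC1}. The only place you are a little loose---and this matches the paper's own brevity---is the verification of \eqref{eq:T2}: boundedness of $\fki$ and $\delta_i\fki$ does not by itself close the argument, because \eqref{eq:T2} involves $\partial_{x_i}\delta_i\fki$, which carries a $(1-x_i^2)^{-1/2}$ singularity coming from differentiating $\sqrt{1-x_i^2}\,\tilde{P}_{k_i-1}^{\alpha_i+1,\beta_i+1}$. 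The product $p_i^2 w_i\,\partial_{x_i}\delta_i\fki$ near $x_i=1$ is then of order $(1-x_i)^{\alpha_i+1/2}$ (and symmetrically of order $(1+x_i)^{\beta_i+1/2}$ near $x_i=-1$), which tends to zero for $\alpha_i,\beta_i>-1/2$ but gives a nonzero limit at the endpoint values $\alpha_i=-1/2$ or $\beta_i=-1/2$; treating those boundary parameters requires the kind of continuity argument that the paper spells out in Sections~\ref{LagConv} and~\ref{JacFun} but omits here. This is an imprecision you share with the paper's proof rather than a gap you have introduced.
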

\begin{remark}
	As in the previous two sections \eqref{eq:JaPol} holds for all $f\in L^p$ with $48\,(p^*-1)$ in place of $24\,(p^*-1).$ This follows from \eqref{eq:PiJaPol}.
\end{remark}
\begin{proof}
We are going to apply Theorem \ref{thm:main}, so we need to verify its assumptions for parameters $\alpha,\beta\in[-1/2,\infty)^d.$
	
	By \eqref{eq:quanJaPol} we see that if $\alpha,\beta\in [-1/2,\infty)^d,$ then  \eqref{eq:A1} and \eqref{eq:A2} (with $K=0$) are satisfied. Similarly, using \eqref{eq:derJacPol} one can see that, for such $\alpha$ and $\beta,$ the  conditions \eqref{eq:T1} and \eqref{eq:T2} also hold. The assumption \eqref{eq:T3} follows from Lemma \ref{lem:BC1} together with \eqref{eq:derJacPol}.
	
Now, since $\int_{X} f(y)\, d\mu(y)=0$ implies  $\Pi f=f,$ an application of Theorem \ref{thm:main} completes the proof of Theorem \ref{thm:JaPol}.
\end{proof}

\subsection{Harmonic oscillator - Hermite function expansions}
\label{HarmOsc}
Here we take
\begin{align*}p_i&=1,\quad q_i=x_i ,\quad a_i=1,\quad w_i(x_i)=1,\quad d\mu_i(x_i)=dx_i,\quad X_i=\R,\end{align*}
so that
\begin{equation}
\label{eq:quanHarmOsc}
\delta_i=\partial_{x_i}+x_i,\quad \d_i=\partial_{x_i},\quad  \delta_i^*=-\partial_{x_i}+x_i,\quad 
v_i=[\delta_i,\delta_i^*]=2,\qquad r(x)=|x|^2,
\end{equation}
and $L$ is the harmonic oscillator
$$
L
=\sum_{i=1}^{d}L_i
=-\Delta+|x|^2.
$$
It is well known that $L$ is essentially self-adjoint on $C_c^{\infty}(\R^d)$ with the self-adjoint extension given by
$$Lf=\sum_{k\in \N^d}(2|k|+d)\sk{f}{h_{k}}_{L^2}h_{k};$$
here $L^2=L^2(\R^d,dx),$ while $\{h_k\}_{k\in \N^d}$ is the system of $L^2$ normalized Hermite functions, see \cite[Section 7.4]{NS2}. The functions $h_k$ are our $\fk$'s in this section. They are of the form $h_k=h_{k_1}\otimes\cdots\otimes h_{k_d}$ where \begin{equation}
\label{eq:Hermfunform}
h_{k_i}(x_i)=\tilde{H}_{k_i}(x_i)e^{-x_i^2/2},\qquad x_i\in \R,
\end{equation}
with $\tilde{H}_{k_i}$ being the Hermite polynomial from Section \ref{OUop}. Note that as $0$ is not an $L^2$ eigenvalue of $L$  the projection $\Pi$ equals the identity operator.   

Next
\begin{equation}
\label{eq:donHermFunc}
\delta_i h_k=\sqrt{2k_j}h_{k-e_j},\end{equation}
and thus the Riesz transform is
$$R_i f=\sum_{k\in \N^d,k\neq 0}\bigg(\frac{2k_j}{2|k|+d}\bigg)^{1/2}\sk{f}{h_{k}}_{L^2}h_{k-e_i},\qquad f\in L^2.$$  
Here dimension-free bounds for the vector of Riesz transforms can be deduced, by means of transference, from the paper of Coulhon, M\"uller, and Zienkiewicz \cite{CMZ} (see also \cite{HRST} and \cite{LP2} for different proofs). Moreover, a dimension-free bound for the vector of Riesz transforms which is additionally linear in $p^*$  was proved by Dragi\v{c}evi\'c and Volberg in \cite[Proposition 4]{DV-Sch}.

Using Theorem \ref{thm:main} we are able to obtain a more explicit estimate for the vector ${\bf R}f$ than in \cite{DV-Sch}. However, contrary to \cite{DV-Sch}, our method says nothing about the vector of 'adjoint' transforms ${\bf R}^*f=(\delta_1^*L^{-1/2}f,\ldots, \delta_d^* L^{-1/2}f).$ 
\begin{thm}
	\label{thm:HarmOsc}
	For $1<p<\infty$ we have 
	\begin{equation*}
	\left\|{\bf R} f\right\|_p\leq 48(p^*-1)\|f\|_{p},\qquad f\in L^p.
	\end{equation*}
\end{thm}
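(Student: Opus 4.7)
The plan is to apply Theorem \ref{thm:main} directly, so the task reduces to verifying the assumptions \eqref{eq:A1}, \eqref{eq:A2}, \eqref{eq:T1}, \eqref{eq:T2}, \eqref{eq:T3} in the Hermite function setting and to identifying the relevant constants. From \eqref{eq:quanHarmOsc} we read off $v_i = 2 \geq 0$, so \eqref{eq:A1} is immediate. Still from \eqref{eq:quanHarmOsc}, $\sum_{i=1}^d q_i^2(x_i) = |x|^2 = r(x)$, so \eqref{eq:A2} holds with $K = 1$. Moreover $\Lambda_0 = d > 0$, hence $\Pi = I$. Plugging $K=1$ into Theorem \ref{thm:main} produces the constant $24(1+\sqrt{1}) = 48$, which matches the claimed bound.

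For \eqref{eq:T1} I would either compute directly using integration by parts on $\R$ (the boundary terms vanishing thanks to the Gaussian factor in \eqref{eq:Hermfunform}), or invoke the analogous relation already used in Section \ref{OUop} and translate it to the Hermite function side via the multiplication by $e^{-x_i^2/2}$. The condition \eqref{eq:T2} will be the easiest piece: every $h_{k_i}$ is a polynomial times $e^{-x_i^2/2}$, and $\delta_i h_{k_i}$ is again of this form by \eqref{eq:donHermFunc}, so the products appearing in \eqref{eq:T2} decay like $P(x_i)\,e^{-c x_i^2/2}$ for a polynomial $P$ as $x_i \to \pm\infty$, and in particular vanish in the limit.

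For the density condition \eqref{eq:T3} I would apply Lemma \ref{lem:BC1} with the underlying measure $d\nu(x) = e^{-|x|^2}\,dx$ on $\R^d$; this $\nu$ clearly satisfies the exponential integrability hypothesis of the lemma. Then the map $g \mapsto g\, e^{-|x|^2/2}$ is an isometry from $L^p(\R^d, e^{-|x|^2}\,dx)$ onto $L^p(\R^d, dx)$, and it sends multivariate polynomials onto $\mD = \lin\{h_k\}$ by the definition \eqref{eq:Hermfunform} of the Hermite functions. Hence $\mD$ is dense in $L^p(\R^d,dx)$. The same trick applied to \eqref{eq:donHermFunc} shows $\mD_i = \delta_i[\mD] = \mD \setminus \{h_0\text{-component}\}$ (modulo multiplying by constants), which is still dense in $L^p$.

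With all five assumptions verified and $K=1$, $\Pi = I$, the conclusion of Theorem \ref{thm:main} reads $\|{\bf R} f\|_p \leq 24(1+\sqrt{1})(p^*-1)\|f\|_p = 48(p^*-1)\|f\|_p$, which is exactly \eqref{eq:HarmOsc}. No step is really an obstacle here; the only mildly delicate point is confirming \eqref{eq:T3} (and hence that the dense class on which Theorem \ref{thm:bilem} was established actually captures all of $L^p$), but the reduction to Gaussian-weighted polynomials via \eqref{eq:Hermfunform} makes this routine.
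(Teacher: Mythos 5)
Your proof follows the same route as the paper: verify \eqref{eq:A1}, \eqref{eq:A2} (with $K=1$), \eqref{eq:T1}, \eqref{eq:T2}, \eqref{eq:T3}, note that $\Lambda_0 = d > 0$ forces $\Pi = I$, and then read off the constant $24(1+\sqrt{1})=48$ from Theorem \ref{thm:main}. The paper itself carries out the same checks, just more tersely (it attributes \eqref{eq:T3} to ``the well-known density of Hermite functions in $L^p$'' without a detailed argument).

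One small but genuine slip in your treatment of \eqref{eq:T3}: the map $g \mapsto g\,e^{-|x|^2/2}$ is \emph{not} an isometry from $L^p(\R^d, e^{-|x|^2}\,dx)$ onto $L^p(\R^d,dx)$ unless $p=2$, since
$\|g e^{-|\cdot|^2/2}\|_{L^p(dx)}^p = \int |g(x)|^p e^{-p|x|^2/2}\,dx$,
whereas $\|g\|_{L^p(e^{-|x|^2}dx)}^p = \int |g(x)|^p e^{-|x|^2}\,dx$. To make the argument work you should instead invoke Lemma \ref{lem:BC1} with $d\nu(x)=e^{-p|x|^2/2}\,dx$ (which still satisfies the exponential-moment hypothesis for every fixed $p$); then $g\mapsto g\,e^{-|x|^2/2}$ \emph{is} an isometry from $L^p(\nu)$ onto $L^p(dx)$ and carries polynomials onto $\mD$. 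Also, by \eqref{eq:donHermFunc} every $h_m$ arises as a constant multiple of $\delta_i h_{m+e_i}$, so in fact $\mD_i = \mD$ (the $h_0$-component is \emph{not} missing). Neither of these affects the conclusion, but the isometry claim as stated is incorrect for $p\neq 2$.
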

\begin{proof}
	We apply Theorem \ref{thm:main}. In order to do so we need to check that its assumptions are satisfied.
	
	The equation \eqref{eq:quanHarmOsc} gives \eqref{eq:A1} and \eqref{eq:A2} with $K=1$. Condition \eqref{eq:T1} is straightforward. The assumption \eqref{eq:T2} holds since, by \eqref{eq:Hermfunform}, Hermite functions $h_{k_i}$ vanish rapidly at $\pm \infty$. Finally, \eqref{eq:T3} follows from \eqref{eq:donHermFunc} and the (well-known) density of Hermite functions in $L^p,$ $1\leq p<\infty.$
	
	Thus, an application of Theorem \ref{thm:main} is justified and the proof of Theorem \ref{thm:HarmOsc} is completed.
\end{proof}

\subsection{Laguerre operator - Laguerre function expansions of Hermite type}

\label{LagHerm}
For a parameter $\alpha\in (-1,\infty)^d$ we consider
\begin{align*}p_i&=1,\quad q_i=x_i-\frac{\alpha_i+1/2}{x_i} ,\quad a_i=1,\quad w_i(x_i)=1,\quad d\mu_i(x_i)=dx_i,\quad X_i=(0,\infty),\end{align*}
so that
\begin{equation}
\label{eq:quanLagHerm}
\begin{split}
\delta_i&=\partial_{x_i}+x_i-\frac{\alpha_i+1/2}{x_i},\quad \d_i=\partial_{x_i},\quad \delta_i^*=-\partial_{x_i}+x_i-\frac{\alpha_i+1/2}{x_i},\\
v_i&=[\delta_i,\delta_i^*]=2,\qquad r(x)=|x|^2+\sum_{i=1}^d\frac{\alpha_i^2-1/4}{x_i^2}.
\end{split}
\end{equation}
Here $L$ is the Laguerre operator 
$$
L
=\sum_{i=1}^{d}L_i
=-\Delta+|x|^2+\sum_{i=1}^d\frac{\alpha_i^2-1/4}{x_i^2}.
$$
Then $L$ is symmetric on $C_c^{\infty}(\R^d)$ and has a self-adjoint extension given by
$$Lf=\sum_{k\in \N^d}(4|k|+2d+2|\alpha|)\sk{f}{\fk^{\alpha}}_{L^2}\fk^{\alpha}.$$
In the above formula we denote $|k|=k_1+\cdots +k_d$ and $|\alpha|=\alpha_1+\cdots +\alpha_d;$ note that $|\alpha|$ may be negative. By $L^2$ we mean $L^2((0,\infty)^d,dx),$ while $\{\fk^{\alpha}\}_{k\in \N^d}$ stands for the system of $L^2$ normalized Laguerre functions of Hermite type, see \cite[Section 7.5]{NS2}. The functions $\fk^{\alpha}$ are the tensor products $\fk^{\alpha}=\varphi_{k_1}^{\alpha_1}\otimes\cdots\otimes \varphi_{k_d}^{\alpha_d}$ 
with
\begin{equation}
\label{eq:LagHermfuncform}
\varphi_{k_i}^{\alpha_i}(x_i)=\sqrt{2}\,\tilde{L}_{k_i}^{\alpha_i}(x_i^2)\,x_i^{\alpha_i+1/2}e^{-x_i^2/2},\qquad x_i>0,
\end{equation}
and $\tilde{L}_{k_i}^{\alpha_i}$ being the Laguerre polynomials from Section \ref{LagPol}.
In this section we take $$\fk=\fk^{\alpha}.$$ As $0$ is not an $L^2$ eigenvalue of $L$  the projection $\Pi$ equals the identity operator.

Next
\begin{equation}
\label{eq:donLagHerm}
\delta_i \fk^{\alpha}=-2\sqrt{k_j}\f_{k-e_j}^{\alpha+e_j},\end{equation}
and thus the Riesz transform is
$$R_i f=-\sum_{k\in \N^d,k\neq 0}\bigg(\frac{4k_i}{4|k|+2|\alpha|+2d}\bigg)^{1/2}\sk{f}{\fk^{\alpha}}_{L^2}\f_{k-e_j}^{\alpha+e_j},\qquad f\in L^2.$$ 
Dimension-free bounds for single Riesz transforms $R_i$ were obtained by Stempak and the author \cite[Theorem 5.1]{StWr1} for a certain restricted range of the parameter $\alpha.$

In this section, for $\alpha\in (1/2,\infty)^d$  we denote \begin{equation*}
C(\alpha)=\max_{i=1,\ldots,d}\frac{\alpha_i+1/2}{\alpha_i-1/2}.\end{equation*} By using Theorem \ref{thm:main} we obtain the following strengthening of \cite[Theorem 5.1]{StWr1} in the case $\alpha\in (1/2,\infty)^d$.
\begin{thm}
	\label{thm:LagHerm}
	Let $\alpha\in (1/2,\infty)^d.$ Then, for $1<p<\infty,$ we have 
	\begin{equation*}
	\left\|{\bf R} f\right\|_p\leq 24(1+\sqrt{C(\alpha)})(p^*-1)\|f\|_{p},\qquad f\in L^p.
	\end{equation*}
\end{thm}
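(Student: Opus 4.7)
The plan is to deduce Theorem \ref{thm:LagHerm} directly from Theorem \ref{thm:main} by verifying the assumptions \eqref{eq:A1}, \eqref{eq:A2}, \eqref{eq:T1}, \eqref{eq:T2}, \eqref{eq:T3} in the Laguerre-function-of-Hermite-type setting, with the correct value $K=C(\alpha)$. Since Theorem \ref{thm:main} produces the constant $24(1+\sqrt{K})(p^*-1)$, identifying $K=C(\alpha)$ immediately yields the stated bound.

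The crucial assumption \eqref{eq:A1} is trivial from \eqref{eq:quanLagHerm} since $v_i\equiv 2\ge 0$. The key step is verifying \eqref{eq:A2} with $K=C(\alpha)$. A direct calculation gives
\[
q_i^2(x_i)=x_i^2-2(\alpha_i+1/2)+\frac{(\alpha_i+1/2)^2}{x_i^2},
\]
hence $\sum_{i=1}^d q_i^2\le |x|^2+\sum_{i=1}^d (\alpha_i+1/2)^2 x_i^{-2}$, since the constant term is negative. Recalling from \eqref{eq:quanLagHerm} that
\[
r(x)=|x|^2+\sum_{i=1}^d\frac{(\alpha_i-1/2)(\alpha_i+1/2)}{x_i^2},
\]
and noting that for $\alpha_i>1/2$ one has $C(\alpha)\ge 1$ and $C(\alpha)(\alpha_i-1/2)\ge \alpha_i+1/2$, we obtain $\sum_i q_i^2\le C(\alpha)\cdot r$, which is precisely \eqref{eq:A2} with $K=C(\alpha)$.

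The technical assumptions are routine but use $\alpha\in(1/2,\infty)^d$ in an essential way. Condition \eqref{eq:T1} follows from an integration by parts together with the explicit action \eqref{eq:donLagHerm} of $\delta_i$ on $\varphi_k^\alpha$. For \eqref{eq:T3}, I would combine \eqref{eq:donLagHerm} with the well-known density of Laguerre functions of Hermite type in $L^p((0,\infty)^d,dx)$ for $1\le p<\infty$, which can be reduced to Lemma \ref{lem:BC1} after factoring out $e^{-x_i^2/2}x_i^{\alpha_i+1/2}$. The main obstacle, and the step which forces the restriction $\alpha_i>1/2$, is the boundary condition \eqref{eq:T2}. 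From the explicit form \eqref{eq:LagHermfuncform} we have $\varphi_{k_i}^{\alpha_i}(x_i)\sim x_i^{\alpha_i+1/2}$ as $x_i\to 0^+$, with exponential decay at $+\infty$. Differentiating produces a factor of order $x_i^{\alpha_i-1/2}$ near $0$, so $p_i^2 w_i\partial_{x_i}\varphi_{k_i}^{\alpha_i}=\partial_{x_i}\varphi_{k_i}^{\alpha_i}\to 0$ as $x_i\to 0^+$ precisely when $\alpha_i>1/2$; analogous estimates using \eqref{eq:donLagHerm} (which shifts $\alpha_i$ to $\alpha_i+1$) handle the $\delta_i\varphi_{k_i}^{\alpha_i}$ piece. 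At $+\infty$ the exponential factor absorbs every polynomial, so all the boundary terms in \eqref{eq:T2} vanish.

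Once these verifications are in place, Theorem \ref{thm:main} applies with $\Pi=I$ (since $0$ is not an $L^2$ eigenvalue of $L$, as noted in the setup of this subsection), giving $\|\mathbf{R}f\|_p\le 24(1+\sqrt{C(\alpha)})(p^*-1)\|f\|_p$ for all $f\in L^p$, which is exactly the claimed inequality.
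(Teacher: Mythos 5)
Your proof is correct and follows essentially the same route as the paper: verify the hypotheses of Theorem \ref{thm:main} in this setting with $K=C(\alpha)$, note $\Pi=I$, and invoke the theorem. Your explicit calculation for \eqref{eq:A2} (dropping the negative constant term in $q_i^2$ and using $C(\alpha)\geq 1$ and $C(\alpha)(\alpha_i-1/2)\geq\alpha_i+1/2$) usefully fills in detail the paper leaves implicit. One small refinement on \eqref{eq:T3}: the paper cites \cite[Lemma 5.2]{No0}, and your alternative reduction to Lemma \ref{lem:BC1} is workable, but after factoring out $\prod_i x_i^{\alpha_i+1/2}e^{-x_i^2/2}$ you are left with density of polynomials in $x_i^2$ (not in $x_i$) with respect to a weighted measure on $(0,\infty)^d$; one still needs the substitution $y_i=x_i^2$ to turn these into genuine polynomials in the new variables so that Lemma \ref{lem:BC1} applies and to check the exponential integrability of the transformed measure.
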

\begin{proof}
	We apply Theorem \ref{thm:main}. In order to do so we need to check that its assumptions are satisfied.
	
	The formula \eqref{eq:quanLagHerm} gives \eqref{eq:A1} and \eqref{eq:A2} for $\alpha\in (1/2,\infty)^d$ with $K=C(\alpha)$. Conditions \eqref{eq:T1} and \eqref{eq:T2} follow from \eqref{eq:LagHermfuncform} and \eqref{eq:donLagHerm}. Finally, \eqref{eq:T3} follows from \cite[Lemma 5.2]{No0} and \eqref{eq:donLagHerm}.
	
	Thus, an application of Theorem \ref{thm:main} is justified and the proof of Theorem \ref{thm:LagHerm} is completed.
\end{proof}

\subsection{Laguerre operator - Laguerre function expansions of convolution type}
\label{LagConv}
For a parameter $\alpha\in (-1,\infty)^d$ we consider
\begin{align*}p_i&=1,\quad q_i=x_i ,\quad a_i=2\alpha_i+2,\quad \\
w_i(x_i)&=x_i^{2\alpha_i+1},\quad d\mu_i(x_i)=w_i(x_i)dx_i,\quad\quad X_i=(0,\infty),\end{align*}
so that
\begin{equation}
\label{eq:quanLagConv}
\begin{split}
\delta_i&=\partial_{x_i}+x_i,\quad \d_i=\partial_{x_i},\quad \delta_i^*=-\partial_{x_i}+x_i-\frac{2\alpha_i+1}{x_i},\\
v_i&=[\delta_i,\delta_i^*]=2+\frac{2\alpha+1}{x_i^2},\qquad r(x)=|x|^2.
\end{split}
\end{equation}
Here $L$ is the Laguerre operator 
$$
L
=\sum_{i=1}^{d}L_i
=-\Delta+|x|^2-\sum_{i=1}^d\frac{2\alpha_i+1}{x_i}\partial_{x_i}.
$$
Then $L$ is symmetric on $C_c^{\infty}((0,\infty)^d)$ and has a self-adjoint extension given by
$$Lf=\sum_{k\in \N^d}(4|k|+2d+2|\alpha|)\sk{f}{\ell_k^{\alpha}}_{L^2}\ell_k^{\alpha};$$
here $L^2=L^2((0,\infty)^d,w(x)dx),$ while $\{\ell_k^{\alpha}\}_{k\in \N^d}$ is the system of $L^2$ normalized Laguerre functions of convolution type, see \cite[Section 7.6]{NS2}. The functions $\ell_k^{\alpha}$ are of the form $\ell_k^{\alpha}=\ell_{k_1}^{\alpha_1}\otimes\cdots\otimes \ell_{k_d}^{\alpha_d}$ 
with
\begin{equation}
\label{eq:LagConvfuncform}
\ell_{k_i}^{\alpha_i}(x_i)=\sqrt{2}\,\tilde{L}_{k_i}^{\alpha_i}(x_i^2)\,e^{-x_i^2/2},\qquad x_i>0,
\end{equation}
and $\tilde{L}_{k_i}^{\alpha_i}$ being the Laguerre polynomials from Section \ref{LagPol}.
In this section we take $$\fk=\ell_k^{\alpha}.$$ Also here, as $0$ is not an $L^2$ eigenvalue of $L,$ the projection $\Pi$ equals the identity operator.

Next
\begin{equation}
\label{eq:donLagConv}
\delta_i \ell_k^{\alpha}=-2\sqrt{k_i}\,x_i\,\ell_{k-e_i}^{\alpha+e_i},\end{equation}
and thus the Riesz transform is
$$R_i f=-\sum_{k\in \N^d,k\neq 0}\bigg(\frac{4k_i}{4|k|+2|\alpha|+2d}\bigg)^{1/2}\sk{f}{\ell_k^{\alpha}}_{L^2}\ell_{k-e_i}^{\alpha+e_i},\qquad f\in L^2.$$  
The boundedness of these Riesz transforms on $L^p$ was proved by Nowak and Stempak, see \cite[Theorem 3.4]{NS3}. Later Nowak and Szarek \cite[Theorem 4.1]{NSZ1} enlarged the range of admitted parameters $\alpha.$ In both of these papers the Calder\'on-Zygmund theory was used, thus the $L^p$ bounds depended on the dimension $d.$ Applying Theorem \ref{thm:main} we obtain a dimension-free bound for the vectorial Riesz transform ${\bf R} f=(R_1 f,\ldots, R_d f).$  
\begin{thm}
	\label{thm:LagConv}
	Let $\alpha\in [-1/2,\infty)^d.$ Then, for $1<p<\infty,$ we have 
	\begin{equation}
	\label{eq:LagConv}
	\left\|{\bf R} f\right\|_p\leq 48(p^*-1)\|f\|_{p},\qquad f\in L^p.
	\end{equation}
\end{thm}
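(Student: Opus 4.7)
The plan is to deduce Theorem \ref{thm:LagConv} as a direct application of Theorem \ref{thm:main}, so the task reduces to checking the hypotheses \eqref{eq:A1}, \eqref{eq:A2}, \eqref{eq:T1}, \eqref{eq:T2}, \eqref{eq:T3} in the convolution-type Laguerre setting for $\alpha\in[-1/2,\infty)^d$, and then reading off the constant.

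First I would extract the two critical assumptions directly from \eqref{eq:quanLagConv}. Since $v_i(x_i)=2+(2\alpha_i+1)/x_i^2$, the bound $v_i\geq 0$ is equivalent to $\alpha_i\geq -1/2$, which gives \eqref{eq:A1} precisely on the stated range. Moreover $q_i(x_i)=x_i$ and $r(x)=|x|^2$, hence $\sum_{i=1}^d q_i^2(x_i)=|x|^2=r(x)$, so \eqref{eq:A2} holds with $K=1$. These two facts alone account for the constant $24(1+\sqrt{1})(p^*-1)=48(p^*-1)$ appearing in \eqref{eq:LagConv}.

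Next I would verify \eqref{eq:T1} and \eqref{eq:T2}, both of which are routine boundary-vanishing statements. Using $p_i=1$, $w_i(x_i)=x_i^{2\alpha_i+1}$, and the explicit formula \eqref{eq:LagConvfuncform}, the Gaussian factor $e^{-x_i^2/2}$ present in $\ell_{k_i}^{\alpha_i}$ and, via \eqref{eq:donLagConv}, also in $\delta_i\ell_k^\alpha$, kills all polynomial growth at $x_i\to\infty$; while near $x_i=0$ the weight $x_i^{2\alpha_i+1}$ together with the fact that $\partial_{x_i}\ell_{k_i}^{\alpha_i}(x_i)$ vanishes like $x_i$ (since $\ell_{k_i}^{\alpha_i}$ is an even function of $x_i$ up to the Gaussian) makes the boundary traces in \eqref{eq:T2} go to zero for every $\alpha_i\geq -1/2$, uniformly in $s_1,s_2>0$. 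The identity \eqref{eq:T1} then follows by integration by parts in $x_i$ over $(0,\infty)$, the boundary terms disappearing for the same reason.

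The one non-trivial point is \eqref{eq:T3}. From \eqref{eq:donLagConv} we have $\mD_i=\lin\{x_i\,\ell_{k-e_i}^{\alpha+e_i}\}$, so density of $\mD$ and of each $\mD_i$ in $L^p((0,\infty)^d,w\,dx)$ can be treated in a unified way by the substitution $y_i=x_i^2$. Under this change of variables, $\ell_k^\alpha$ becomes a polynomial times $e^{-|y|/2}$, and a straightforward identification $g(y)=f(\sqrt{y})e^{|y|/2}$ shows that density of $\mD$ in $L^p((0,\infty)^d,x^{2\alpha+1}dx)$ is equivalent to density of multivariable polynomials in $L^p((0,\infty)^d,y^\alpha e^{-p|y|/2}dy)$; the latter measure plainly satisfies the exponential-moment condition of Lemma \ref{lem:BC1}, which yields the required density. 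The same argument handles $\mD_i$. Once all five assumptions are in force, Theorem \ref{thm:main} (with $\Pi=I$, since $0$ is not an $L^2$ eigenvalue of $L$) delivers \eqref{eq:LagConv} with constant $48(p^*-1)$.
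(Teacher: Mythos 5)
Your extraction of \eqref{eq:A1}, \eqref{eq:A2} with $K=1$ and the resulting constant $48(p^*-1)$ is correct, and your change of variables $y_i=x_i^2$ reducing \eqref{eq:T3} to Lemma~\ref{lem:BC1} is a legitimate alternative to the paper's citation of an external density lemma (the paper instead cites \cite[Lemma 4.3]{No0} for $\mD$ and then passes to $\mD_i$ via \eqref{eq:donLagConv}; your route is self-contained, though the $\mD_i$ case picks up an extra factor $x_i$ that must be absorbed into the measure rather than treated as a polynomial). The genuine problem is your verification of \eqref{eq:T2} ``for every $\alpha_i\geq -1/2$''. Condition \eqref{eq:T2} contains a \emph{second} limit, involving $\partial_{x_i}\delta_i\fki$ rather than $\partial_{x_i}\fki$, and your argument (oddness/evenness up to the Gaussian) only controls the first. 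For $k_i\geq 1$, \eqref{eq:donLagConv} gives $\delta_i\ell_{k_i}^{\alpha_i}(x_i)=-2\sqrt{k_i}\,x_i\,\ell_{k_i-1}^{\alpha_i+1}(x_i)$, which is an \emph{odd} function (up to the Gaussian), so
\[
\partial_{x_i}\delta_i\ell_{k_i}^{\alpha_i}(x_i)=-2\sqrt{k_i}\,\ell_{k_i-1}^{\alpha_i+1}(x_i)+O(x_i^2)\ \longrightarrow\ -2\sqrt{k_i}\,\ell_{k_i-1}^{\alpha_i+1}(0)\neq 0
\]
as $x_i\to 0^+$. Since $p_i^2w_i=x_i^{2\alpha_i+1}$, the boundary trace $p_i^2w_i\,\partial_{x_i}\delta_i\ell_{k_i}^{\alpha_i}$ tends to a nonzero constant at $\alpha_i=-1/2$, so \eqref{eq:T2} fails there and Theorem~\ref{thm:main} cannot be applied directly at the endpoint.

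The paper anticipates exactly this and opens the proof with a continuity argument: using \eqref{eq:LagConvfuncform} and \eqref{eq:donLagConv}, the eigenfunctions $\ell_k^{\alpha}$ and $\delta_i\ell_k^{\alpha}$ depend continuously on $\alpha$, and the Riesz transforms act diagonally on finite spans of these, so the estimate \eqref{eq:LagConv} proven for $\alpha\in(-1/2,\infty)^d$ passes to the limit $\alpha_i\downarrow -1/2$. With this reduction in place, your verification of \eqref{eq:T2} does go through (as then $2\alpha_i+1>0$). Add that reduction, and re-examine your \eqref{eq:T2} argument to include the $\partial_{x_i}\delta_i\fki$ trace; as written, your proof asserts a boundary condition that is false at $\alpha_i=-1/2$.
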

\begin{proof}
A continuity argument based on \eqref{eq:LagConvfuncform} and \eqref{eq:donLagConv} shows that it suffices to prove \eqref{eq:LagConv} for $\alpha\in(-1/2,\infty)^d.$
	We are going to apply Theorem \ref{thm:main}. In order to do so we need to check that its assumptions are satisfied.
	
	The formula \eqref{eq:quanLagConv} gives \eqref{eq:A1} and \eqref{eq:A2} with $K=1$. Conditions \eqref{eq:T1} and \eqref{eq:T2} follow from \eqref{eq:LagConvfuncform} and \eqref{eq:donLagConv}. It remains to prove \eqref{eq:T3}. For the space $\mD$ this condition follows from \cite[Lemma 4.3]{No0}. In the case of $\mD_i,$ $i=1,\ldots,d,$ the assumption \eqref{eq:T3} can be deduced from \eqref{eq:T3} for $\mD$ together with \eqref{eq:donLagConv}.   
	
	Thus, an application of Theorem \ref{thm:main} is justified and the proof of Theorem \ref{thm:LagConv} is completed.
\end{proof}

\subsection{Jacobi operator - Jacobi function expansions}
\label{JacFun}
For parameters $\alpha,\beta\in (-1,\infty)^d$ we consider
\begin{align*}p_i&=1,\quad q_i=-\frac{2\alpha_i+1}{4}\cot\frac{x_i}{2}+\frac{2\beta_i+1}{4}\tan\frac{x_i}{2} ,\quad a_i=(\alpha_i+\beta_1+1)^2/4,\\ w_i(x_i)&=1,\quad d\mu_i(x_i)=dx_i,\quad X_i=(0,\pi),\end{align*}
so that
\begin{equation}
\label{eq:quanJacFun}
\begin{split}
\delta_i&=\partial_{x_i}-\frac{2\alpha_i+1}{4}\cot\frac{x_i}{2}+\frac{2\beta_i+1}{4}\tan\frac{x_i}{2},\quad \d_i=\partial_{x_i},\\ \delta_i^*&=-\partial_{x_i}-\frac{2\alpha_i+1}{4}\cot\frac{x_i}{2}+\frac{2\beta_i+1}{4}\tan\frac{x_i}{2},\\
v_i&=[\delta_i,\delta_i^*]=\frac{2\alpha_i+1}{8\cos^2\frac{x_i}{2}}+\frac{2\beta_i+1}{8\sin^2\frac{x_i}{2}},\\ r(x) &=\sum_{i=1}^d\frac{(2\alpha_i+1)^2}{16}\cot^2\frac{x_i}{2}+\frac{(2\beta_i+1)^2}{16}\tan^2\frac{x_i}{2}+\frac{(\alpha_i+\beta_i+1)^2-(2\alpha_i+1)(2\beta_i+1)}{16}.
\end{split}
\end{equation}
Here $L$ is the Jacobi operator
$$
L
=\sum_{i=1}^{d}L_i
=-\Delta+\sum_{i=1}^d\bigg(\frac{4\alpha_i^2-1}{16\sin^2\frac{x_i}{2}}+
	\frac{4\beta_i^2-1}{16\cos^2\frac{x_i}{2}}\bigg).
	$$
Then $L$ is symmetric on $C_c^{\infty}((0,\pi)^d)$ and has a self-adjoint extension given by
$$Lf=\sum_{k\in \N^d}\la_k\sk{f}{\phi_k^{\alpha,\beta}}_{L^2}\phi_k^{\alpha,\beta};$$
here 
$\la_{k}=\sum_{i=1}^d\la_{k_i}^i$ with
$\la_{k_i}^i=(k_i+\frac{\alpha_i+\beta_i+1}{2})^2,$
$L^2=L^2((0,\pi)^d,dx),$ while $\{\phi_k^{\alpha,\beta}\}_{k\in \N^d}$ is the system of $L^2$ normalized Jacobi functions, see \cite[Section 7.7]{NS2}. These Jacobi function have the tensor product form $\phi_k^{\alpha,\beta}=\phi_{k_1}^{\alpha_1,\beta_1}\otimes\cdots\otimes \phi_{k_d}^{\alpha_d,\beta_d}$ with
\begin{equation}
\label{eq:JacFuncForm}
\phi_{k_i}^{\alpha_i,\beta_i}(x_i)=2^{(\alpha_i+\beta_i+1)/2}\tilde{P}_{k_i}^{\alpha_i,\beta_i}(\cos x_i)\,\bigg(\sin \frac{x_i}{2}\bigg)^{\alpha_i+1/2}\,\bigg(\cos \frac{x_i}{2}\bigg)^{\beta_i+1/2},
\end{equation}
for $x_i\in (0,\pi),$
and $\tilde{P}_{k_i}^{\alpha_i,\beta_i}$ being the Jacobi polynomials from Section \ref{sec:Jac}. In this section we take $$\fk=\phi_k^{\alpha,\beta}.$$ In the case when $\alpha,\beta\in [1/2,\infty)^d$ the $L^2$ kernel of $L$ is trivial, and thus the projection $\Pi$ equals the identity operator. 


Next
\begin{equation}
\label{eq:donJacFun}
\delta_i \phi_k^{\alpha,\beta}=-\sqrt{k_i(k_i+\alpha_i+\beta_i+1)}\phi_{k-e_i}^{\alpha+e_i,\beta+e_i},\end{equation}
and thus the Riesz transform is
$$R_i f=-\sum_{k\in \N^d,k_i\neq 0}\bigg(\frac{k_i(k_i+\alpha_i+\beta_i+1)}{\la_k}\bigg)^{1/2}\sk{f}{\phi_k^{\alpha,\beta}}_{L^2}\phi_{k-e_i}^{\alpha+e_i,\beta+e_i},\qquad f\in L^2.$$ 
In the case $d=1$ the $L^p$ boundedness of these Riesz transforms was proved by Stempak in \cite{Ste1}. Using Theorem \ref{thm:main} we obtain the following multi-dimensional bounds.

\begin{thm}
	\label{thm:JacFun}
	Let $\alpha,\beta\in [1/2,\infty)^d.$ Then, for $1<p<\infty,$ we have 
	\begin{equation*}
	\left\|{\bf R} f\right\|_p\leq 48\,(p^*-1)\|f\|_{p},\qquad f\in L^p.
	\end{equation*}
\end{thm}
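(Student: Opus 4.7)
The plan is to apply Theorem \ref{thm:main} in the same way as in the preceding examples, so the work reduces to verifying the five hypotheses \eqref{eq:A1}, \eqref{eq:A2}, \eqref{eq:T1}, \eqref{eq:T2}, and \eqref{eq:T3} for the data in \eqref{eq:quanJacFun}, and then noting that $\Pi=I$ because $0$ is not an $L^2$ eigenvalue of $L$ when $\alpha,\beta\in[1/2,\infty)^d$. The target constant $48(p^*-1)=24(1+\sqrt{1})(p^*-1)$ forces us to establish \eqref{eq:A2} with $K=1$.

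First I would read off \eqref{eq:A1} directly from \eqref{eq:quanJacFun}, since for $\alpha_i,\beta_i\geq -1/2$ both coefficients of $\cos^{-2}(x_i/2)$ and $\sin^{-2}(x_i/2)$ in $v_i$ are non-negative. For \eqref{eq:A2} I would square
$$q_i=-\frac{2\alpha_i+1}{4}\cot\frac{x_i}{2}+\frac{2\beta_i+1}{4}\tan\frac{x_i}{2},$$
observe that the cross term equals $-(2\alpha_i+1)(2\beta_i+1)/8$, and compare with the expression for $r_i$ coming from \eqref{eq:quanJacFun}. The $\cot^2$ and $\tan^2$ parts match exactly, leaving
$$r_i-q_i^2=\frac{(\alpha_i+\beta_i+1)^2+(2\alpha_i+1)(2\beta_i+1)}{16},$$
which is manifestly non-negative when $\alpha_i,\beta_i\geq -1/2$. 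Summing over $i$ gives \eqref{eq:A2} with $K=1$.

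Condition \eqref{eq:T1} follows from a standard one-dimensional integration-by-parts argument together with the boundary vanishing of $\phi_{k_i}^{\alpha_i,\beta_i}$ at $0$ and $\pi$ that is visible from \eqref{eq:JacFuncForm}. For \eqref{eq:T2}, since $p_i=1$ and $w_i=1$, one only needs that $(1+|\phi_{k_i}^{\alpha_i,\beta_i}|^{s_1}+|\delta_i\phi_{k_i}^{\alpha_i,\beta_i}|^{s_2})\partial_{x_i}\phi_{k_i}^{\alpha_i,\beta_i}$ and the analogous expression with $\partial_{x_i}\delta_i\phi_{k_i}^{\alpha_i,\beta_i}$ vanish as $x_i\to 0^+$ and $x_i\to\pi^-$. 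The explicit product form \eqref{eq:JacFuncForm} shows that $\phi_{k_i}^{\alpha_i,\beta_i}(x_i)$ behaves like $x_i^{\alpha_i+1/2}$ near $0$ and like $(\pi-x_i)^{\beta_i+1/2}$ near $\pi$, and from \eqref{eq:donJacFun} the same estimates (with $\alpha_i,\beta_i$ shifted up by $1$) govern $\delta_i\phi_{k_i}^{\alpha_i,\beta_i}$; since $\alpha_i,\beta_i\geq 1/2$ the exponents are $\geq 1$, so differentiation lowers the power by at most one and the boundary values are still zero with room to spare, yielding \eqref{eq:T2} for every $s_1,s_2>0$.

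The remaining condition \eqref{eq:T3} I expect to be the only mildly delicate step, because Lemma \ref{lem:BC1} does not apply directly on $(0,\pi)^d$ to the Jacobi \emph{functions} (they are not polynomials in $x$). I would handle it by the substitution $y_i=\cos x_i$, which transforms $\phi_{k_i}^{\alpha_i,\beta_i}$ into $\tilde P_{k_i}^{\alpha_i,\beta_i}(y_i)$ times the weight factor $2^{(\alpha_i+\beta_i+1)/2}(1-y_i)^{(\alpha_i+1/2)/2}(1+y_i)^{(\beta_i+1/2)/2}/\sqrt{1-y_i^2}$; this conjugates $L^p((0,\pi)^d,dx)$ unitarily (up to a bounded multiplier) with a weighted $L^p$ space on $(-1,1)^d$ in which multivariable polynomials are dense by Lemma \ref{lem:BC1}. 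Density of $\mD$ follows, and density of each $\mD_i$ is then a consequence of \eqref{eq:donJacFun}, which shows that $\delta_i$ maps $\mD$ onto the span of the Jacobi functions with shifted parameters $(\alpha+e_i,\beta+e_i)$, a family covered by the same argument. With all five hypotheses in place, Theorem \ref{thm:main} gives $\|{\bf R}f\|_p\leq 24(1+\sqrt{K})(p^*-1)\|\Pi f\|_p=48(p^*-1)\|f\|_p$, which is the desired bound.
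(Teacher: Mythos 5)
Your verification of \eqref{eq:A1} and \eqref{eq:A2} is correct and in fact more explicit than the paper; the algebra giving
$$r_i-q_i^2=\frac{(\alpha_i+\beta_i+1)^2+(2\alpha_i+1)(2\beta_i+1)}{16}\geq 0$$
is exactly the right computation and legitimises $K=1$. Your handling of \eqref{eq:T3} via the substitution $y_i=\cos x_i$ and Lemma \ref{lem:BC1} is a slightly heavier route than the paper's (which simply invokes density of polynomials on a compact interval), but both are sound.

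There is, however, a genuine gap in your verification of \eqref{eq:T2}: your claim that the exponents are $\geq 1$ so that ``the boundary values are still zero with room to spare'' fails at the endpoints $\alpha_i=1/2$ or $\beta_i=1/2$. From \eqref{eq:JacFuncForm}, near $x_i=0$ one has $\phi_{k_i}^{\alpha_i,\beta_i}(x_i)\sim c\,x_i^{\alpha_i+1/2}$, hence $\partial_{x_i}\phi_{k_i}^{\alpha_i,\beta_i}(x_i)\sim c'\,x_i^{\alpha_i-1/2}$. When $\alpha_i=1/2$ this is a nonzero constant as $x_i\to 0^+$, and since the prefactor $1+|\phi_{k_i}|^{s_1}+|\delta_i\phi_{k_i}|^{s_2}$ tends to $1$, the first limit in \eqref{eq:T2} is a nonzero constant rather than $0$. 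So \eqref{eq:T2} is genuinely false on the boundary of the admitted parameter range. The paper handles this by first performing a continuity argument in $(\alpha,\beta)$ (based on \eqref{eq:JacFuncForm} and \eqref{eq:donJacFun}) to reduce the theorem to parameters in the open set $(1/2,\infty)^d$, for which the exponents are strictly greater than $1/2$ and your argument for \eqref{eq:T2} then goes through. You need to insert this preliminary reduction to $(1/2,\infty)^d$ before verifying the hypotheses of Theorem \ref{thm:main}; with it, your proof matches the paper's.
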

\begin{proof}
	A continuity argument 
	based on \eqref{eq:JacFuncForm} and \eqref{eq:donJacFun} allows us to focus on $\alpha,\beta\in (1/2,\infty)^d.$ We are going to apply Theorem \ref{thm:main} for such parameters $\alpha$ and $\beta$. In order to do so we need to check that its assumptions are satisfied.
	
	The formula \eqref{eq:quanJacFun} gives \eqref{eq:A1} and \eqref{eq:A2} (with $K=1$). Conditions \eqref{eq:T1} and \eqref{eq:T2} follow from \eqref{eq:JacFuncForm} and \eqref{eq:donJacFun},  while \eqref{eq:T3} can be deduced from the density of polynomials in $C((-1,1))$ together with \eqref{eq:JacFuncForm} and \eqref{eq:donJacFun}.
	
	Thus, an application of Theorem \ref{thm:main} is permitted and the proof of Theorem \ref{thm:JacFun} is completed.
\end{proof}

\section*{Acknowledgments} This paper grew out of discussions with Oliver Dragi\v{c}evi\'c during the author's visit at the University of Ljubljana in November 2014. The author is greatly indebted to Oliver Dragi\v{c}evi\'c for these discussions and correspondence on the subject of the article. The author is also very grateful to Adam Nowak for clarifications on \cite{NS2} and many helpful remarks, and to the referee for his very careful reading of the manuscript and many valuable remarks.

Part of the research presented in this paper was carried over while the author was 'Assegnista di ricerca'
at the Universit\`a di Milano-Bicocca, working under the mentorship of Stefano Meda. The research was supported by Italian PRIN 2010 ``Real and complex manifolds:
geometry, topology and harmonic analysis"; Polish funds for sciences, National Science Centre (NCN), Poland, Research Project 2014\slash 15\slash D\slash ST1\slash 00405; and by the Foundation for Polish Science START Scholarship.

\end{document}